\newtheorem{theorem}{Theorem}
\newtheorem*{theorem*}{Main Theorem}
\newtheorem*{theorem**}{Annulus Theorem}
\newtheorem{corollary}[theorem]{Corollary}
\newtheorem{proposition}[theorem]{Proposition}
\theoremstyle{definition}
\newtheorem{definition}[theorem]{Definition}
\newtheorem{example}[theorem]{Example}
\newtheorem*{remark}{Remark}
\begin{document}

\author{H\'ector Barge}
\address{E.T.S. Ingenieros inform\'{a}ticos. Universidad Polit\'{e}cnica de Madrid. 28660 Madrid (Espa{\~{n}}a)}
\email{h.barge@upm.es}

\author{Jos\'e M.R. Sanjurjo}
\address{Facultad de Ciencias Matem{\'a}ticas and Instituto de Matem\'atica Interdisciplinar (IMI). Universidad Complutense de Madrid. 28040 Madrid (Espa{\~{n}}a)}
\email{jose\_sanjurjo@mat.ucm.es}

\keywords{Hopf bifurcation, Attractor, Bosuk's homotopy}
\subjclass[2010]{37G35, 37B35}
\thanks{The authors are supported by the Spanish Ministerio de Ciencia e Innovaci{\'{o}}n (grant PGC2018-098321-B-I00)}
\title[Higher dimensional topology and generalized Hopf bifurcations]{Higher dimensional topology and generalized Hopf bifurcations for discrete dynamical systems}

\begin{abstract}
In this paper we study generalized Poincar\'e-Andronov-Hopf bifurcations of discrete dynamical systems. We prove a general result for attractors in $n$-dimensional manifolds satisfying some suitable conditions. This result allows us to obtain sharper Hopf bifurcation theorems for fixed points in the general case and other attractors in low dimensional manifolds. Topological techniques based on the notion of concentricity of manifolds play a substantial role in the paper.
\end{abstract}

\maketitle

\begin{center}
{\em  Affectionally dedicated to Mar\'ia Jes\'us Chasco on the ocassion of her 65th birthday}
\end{center}

\section{Introduction and preliminaries}

\subsection{Introduction}

In this paper we study families of homeomorphisms $f_\lambda:M\longrightarrow M$ parametrized by the unit interval and defined on $n$-dimensional manifolds. We assume that $f_0$ has an attractor that loses its stability and becomes a repeller for every $\lambda>0$. This phenomenon is known as \emph{generalized Poincar\'e-Andronov-Hopf bifurcation} or, shorlty, \emph{Hopf bifurcation}. 

\begin{definition}
Let $M$  be an $n$-dimensional manifold and $f_\lambda:M\longrightarrow M$ be a family of homeomorphisms continuosly depending on a parameter $\lambda\in[0,1]$. Suppose that $K\subset M$ is an attractor of $f_0$. We say that $K$ undergoes a \emph{Hopf bifurcation} at $\lambda=0$ if $K$ is a repeller of $f_\lambda$ for every $\lambda>0$.
\end{definition}

The study of this kind of bifurcations was originated in the works of Poincar\'e and was continued by Andronov and Hopf. The most famous Hopf bifurcation result refers to the development of periodic orbits from a stable fixed point of a  family of flows induced by a family of ordinary differential equations defined on the plane. There is an analogous result for parametrized families of diffeomorphisms due to Naimark \cite{Naim}, Sacker\cite{Sack} and Ruelle and Takens \cite{RuTa}. For further information about Hopf bifurcations see the book on bifurcation theory by Mardsen and McCracken \cite{MarMc}. See also \cite{SeFlo} for information about the foundations of bifurcation theory.

In this paper we study Hopf bifurcations of general attractors of discrete dynamical systems. In particular, we use homotopical techniques to study the topology of the attractors generated by Hopf bifurcations. We use a form of homotopy theory known as \emph{shape theory} or \emph{Borsuk's homotopy theory}. This theory  has proved to be very useful to study attractors of flows \cite{GuSe, Hast, KapRod, Rob2, SGRAC, Sanin, SanMul}. However in the discrete case this theory has limitations produced by the lack of natural homotopies provided by the dynamical system \cite{RSG, GabSd}. The theory of shape has been used to study bifurcations in the case of continuous dynamical systems \cite{BaSanRAC, BaSanTappl, Barge2020, GLS, Sanjhopf} and it has also been used to study Hopf bifurcations of fixed poins of planar homeomorphisms \cite{BGS}.

 The main result of this paper is the following:

\begin{theorem*}\label{main}
 Let $M$  be an $n$-dimensional manifold and $f_\lambda:M\longrightarrow M$ be a family of homeomorphisms continuosly depending on a parameter $\lambda\in[0,1]$. Suppose that 
\begin{enumerate} 
\item  $K$ is an attractor of $f_0$.
\item $K$ is a topological spine of some compact $n$-dimensional manifold with boundary $N\subset M$ that satisfies the concentric rigidity property. 
\item $K$ undergoes a Hopf bifurcation at $\lambda=0$. 
\end{enumerate}
 Then, there exists $\lambda_0>0$ such that for every $\lambda$ with $0<\lambda<\lambda_0$ there exists an attractor $K_\lambda$ that has the Borsuk's homotopy type (shape) of $\partial N$ and converges to $K$ upper semicontinuosly as $\lambda$ tends to $0$.
\end{theorem*}

As applications of this result we obtain sharper Hopf bifurcation theorems for:
\begin{itemize}
\item Fixed points in $n$-manifolds.
\item Arbitrary  attractors in connected $2$-manifolds.
\item Tame knots in $3$-manifolds.
\item Tame two sided closed $2$-manifolds in $3$-manifolds.
\item Tame $3$-manifolds with boundary in $3$-manifolds.
\item We also obtain other kind of bifurcations involving lower-dimensional attracting spheres.
\end{itemize}

\subsection{Preliminaries}
\subsubsection{Attractors} We recall some standard definitions of dynamical systems. Let $X$ be a locally compact metric space and $f:X\longrightarrow X$ a homeomorphism. Given a subset $Y\subset X$ the $\omega$-\emph{limit} set of $Y$ is defined as
\[
\omega(Y)=\bigcap_{n>0 }\left(\overline{\bigcup_{k\geq n} f^k(Y)}\right).
\] 
The set $\omega(Y)$ is a closed, invariant set. We say that a compactum $K$ is an \emph{attractor} if it possesses a  neighborhood $U$ such that $K=\omega(U)$. The subset $\mathcal{A}(K)=\bigcup_{n\geq 0} f^{-n}(U)$  is called \emph{basin of attraction} of $K$ and is an open invariant subset of $X$.  Attractors are invariant sets and satisfy the following stability property: for every compactum $P\subset \mathcal{A}(K)$ and every neighborhood $V$ of $K$ there exists $n_0\geq 0$ such that $f^{n}(P)\subset V$ for every $n\geq n_0$.  A useful way of characterizing attractors is by means of the so-called traping regions. A \emph{trapping region} $W\subset X$ is a compactum with non-empty interior such that $f(W)\subset\mathring{W}$. Here $\mathring{W}$ denotes the topological interior of $W$. Attractors are determined by trapping regions in the sense that if $K$ is an attractor it possesses a trapping region $W$ and  
\[
K=\bigcap_{n\geq 0}f^n(W)
\]
is an attractor contained in $W$. Notice that trapping regions are robust in the sense that if $W$ is a trapping region for a homeomorphism $f$ and $g$ is another homeomorphism sufficiently close to $f$ then $W$ is also a trapping region for $g$.

A \emph{repeller} $K\subset X$ is a compact set which is an attractor for $f^{-1}$ and we call \emph{basin of repulsion} of $K$, and denote it by $\mathcal{R}(K)$, to the basin of attraction of $K$ for $f^{-1}$. For more information about attractors see \cite{AKHuKe}.

\subsubsection{Bicollars} Let $M$ be an $n$-dimensional manifold. An $(n-1)$-manifold $N\subset M$ is said to be \emph{bicollared} if there exists an embedding $h:N\times[-1,1]\longrightarrow M$ such that $h(N\times\{0\})=N$. Notice that by the Theorem of invariance of domain  $C=h(N\times[-1,1])$ is a neighborhood of $N$ in $M$.

\subsubsection{Triangulations and tame subsets}

A \emph{triangulation} of a topological space $X$ is a pair $(\mathcal{S},h)$ where $\mathcal{S}$ is a locally finite simplicial complex and $h:X\longrightarrow|\mathcal{S}|$ is a homeomorphism between $X$ and the geometric realization of $\mathcal{S}$. 

It is known that every manifold of dimension not larger than three admits a triangulation. This result was proved by Rad\'o in dimension two and by Moise in dimension three. Moreover, every  triangulation of a manifold of dimension $n\leq 3$ is combinatorial. That is, the star of every vertex of the triangulation is combinatorially equivalent to an $n$-dimensional simplex. The book by Moise \cite{Moiseb} contains the proofs of these deep results.

Let $M$ be a $3$-manifold. We say that a compactum $K\subset M$ is \emph{tame} if there exists a triangulation $(\mathcal{S},h)$ of $M$ and a subcomplex $\mathcal{T}\subset\mathcal{S}$ such that $(\mathcal{T},h|_K)$ is a triangulation of $K$. A useful criterion for tameness is \cite[Lemma~5]{SGNLA} which establishes that if $M$ is a $3$-manifold with boundary
\begin{enumerate}
\item A closed $2$-manifold $S\subset{\rm int}(M)$ is tame in ${\rm int}(M)$ if it is bicollared.

\item A compact $3$-manifold with boundary $P\subset{\rm int}(M)$ is tame in ${\rm int}(M)$ if $\partial P$ is bicollared.
\end{enumerate}

\subsubsection{Algebraic topology and Borsuk's homotopy theory}

In this paper we shall make use of some concepts from Algebraic Topology including Alexander and Lefschetz duality theorems. We denote by $H_*$ and $H^*$ the singular homology and cohomology functors respectively and by $\check{H}^*$ the \v Cech cohomology functor. Some standard references covering this material are the books by Hatcher \cite{Hat} and Munkres \cite{Munk}. 

We shall make use of a form of homotopy theory known as \emph{Borsuk's homotopy theory} or \emph{shape theory} that has proved to be useful in the study of attractors of dynamical systems. A  detailed treatment about the theory of shape can be found in the books by Borsuk \cite{Bormono}, Mardesi\v{c} and Segal \cite{MarSe} and Dydak and Segal \cite{DySe}. 

Suppose that $X$ and $Y$ are compact metric spaces. We shall make use of the following properties:

\begin{enumerate}

\item If $X$ and $Y$ have the same homotopy type then $X$ and $Y$ have the same shape.

\item If $X$ and $Y$ are ANRs then $X$ and $Y$ have the same homotopy type if and only if $X$ and $Y$ have the same shape.

\item If $X$ and $Y$ have the same shape then $X$ and $Y$ have isomorphic \v Cech cohomology groups. 
\end{enumerate}

Notice that polyhedra, CW-complexes and manifolds are examples of ANRs.

We shall use the following criterion for shape equivalence (see \cite[Theorem~6]{SGRAC}):

 Let $\{N_k\}_{k\geq 0}$ be a sequence of metric compacta such that $N_{k+1}\subset N_k$ for every $k\geq 0$ and $K=\bigcap_{k\geq 0} N_k$. If for every $k\geq 0$ the inclusion $i_k:N_k\hookrightarrow N_0$ is a shape equivalence then so is the inclusion $i:K\hookrightarrow N_0$.
 
In addition we shall also make use of Borsuk's classification of the shape of plane continua that ensures that two continua contained in $\mathbb{R}^2$ have the same shape if and only if they separate $\mathbb{R}^2$ into the same number of connected components (\cite[Theorem~9.1]{BorCon}). 

\subsection{Outline of the paper}

The paper is structured as follows: in Section~\ref{sec:2} we introduce the definition of concentric manifolds and the concentric rigidity property. We show that the concentric rigidity property is satisfied by the $n$-dimensional closed ball (Proposition~\ref{con:ball}) and by any compact manifold with boundary of dimension two and three (Proposition~\ref{prop:con23}). In Section~\ref{sec:3} we introduce topological spines and give a characterization of them in terms of bases of neighborhoods (Proposition~\ref{prop:con}). In addition we deduce that if $K$ is a topological spine of some compact $n$-manifold with boundary $N$ then the inclusion $i:K\hookrightarrow N$ is a shape equivalence (Corollary~\ref{cor:shape}). We also present some examples of attractors in $\mathbb{R}^3$ that are not spines of any compact $3$-manifold with boundary contained in $\mathbb{R}^3$ (Example~1 and Example~2). Section~\ref{sec:4} is devoted to present some applications of the Main Theorem. We prove that if a fixed point undergoes a Hopf bifurcation it expels an attractor with the Borsuk'shomotopy type of the $(n-1)$-dimensional sphere, where $n$ is the dimension of the phase space (Theorem~\ref{thm:fixed}). We see that if an attracting proper subcontinuum of a connected $2$-manifold undergoes a Hopf bifurcation it expels an attractor with the Borsuk's homotopy type of a finite disjoint union of circles (Theorem~\ref{thm:2man}). The number of components of this attractor is determined whenever the phase space is either the plane or the $2$-sphere. We also study Hopf bifurcations of tame manifolds contained in $3$-manifolds.  We show that if an attracting tame knot in a $3$-manifold undergoes a Hopf bifurcation it expels an attractor with the Borsuk's homotopy type of either the torus or the Klein bottle (Theorem~\ref{thm:knot}). We see that if an attracting tame two sided closed $2$-manifold $K$ in a $3$-manifold undergoes a Hopf bifurcation it expels an attractor with the Borsuk's homotopy type of the disjoint union of two copies of $K$. We also prove that if a tame compact $3$-manifold with boundary  contained in a $3$-manifold undergoes a Hopf bifurcation it expels an attractor with the Borsuk's homotopy  type of its boundary. In Section~\ref{sec:5} we present some general results about attractors. In Section~\ref{sec:6} we present the proof of the Main Theorem. Finally, in Section~\ref{sec:7} we study Hopf bifurcations that occur inside invariant submanfolds, that is, for the restriction of the family of homeomorphisms to an invariant submanifold. In particular, we see that if a fixed point is contained in some invariant $n$-dimensional manifold and undergoes a Hopf bifurcation inside this invariant manifold, then it expels an attractor that has the Borsuk's homotopy type of $S^{n-1}$.

\section{Concentric manifolds and the concentric rigidity property}\label{sec:2}

In this section we recall the concept of concentricity and introduce the concentric rigidity property. We also see that the closed $n$-dimensional ball and every compact two and three manifold with boundary satisfy this property.
\begin{definition}
Let $M$ and $N$ be compact $n$-dimensional manifolds with boundary such that $N\subset {\rm int} (M)$. We say that $M$ and $N$  are \emph{concentric} if $M\setminus{\rm int}(N)$ is homeomorphic to $\partial M\times[0,1]$.
\end{definition}

\begin{remark}
From the definition it follows that if $N\subset {\rm int}(M)$ is concentric with $M$, then $M$ is obtained from $N$ by attaching an exterior collar. This ensures that $N$ is homeomorphic to $M$ (see for instance the proof of \cite[Proposition~3.2]{Hat}). It also follows that $\partial N$ is bicollared. To see this observe that since $N$ is a compact manifold with boundary, $\partial N$ has a collar $C$ contained in $N$. As a consequence $C\cup (M\setminus {\rm int} (N))$ is a bicollar of $\partial N$. 
\end{remark}

Motivated by the work by Edwards about concentricity of $3$-manifolds \cite{Ed1,Ed2} we introduce the following definition.

\begin{definition}\label{def:con}
Let $M$ be a compact $n$-dimensional manifold with boundary. We say that $M$ has  the \emph{concentric rigidity property} if given a pair of compact $n$-manifolds with boundary $M_0$ and $M_1$ that satisfy:
\begin{enumerate}
\item   $M_0\subset{\rm int}(M_1)\subset M_1\subset {\rm int}(M)$.

\item $\partial M_1$  is bicollared and homeomorphic to $\partial M$.

\item $M_0$ is concentric with $M$.
\end{enumerate} 
Then $M_1$ is concentric with both $M_0$ and $M$. 
\end{definition}

\begin{remark}
It follows from the definition that the concentric rigidity property is a topological property.
\end{remark}

The following result shows that the $n$-dimensional closed ball satisfies the concentric rigidity property. To prove this result we need a powerful result known as the annulus Theorem.

\begin{theorem**}
Let $B'$ be an $n$-cell contained in the interior of an $n$-cell $B$. Suppose that $\partial B'$ is bicollared. Then $B\setminus {\rm int}(B')$ is homeomorphic to $S^{n-1}\times[0,1]$. 
\end{theorem**}

We recall that an $n$-cell is a topological space homeomorphic to the $n$-dimensional closed ball. The annulus Theorem was proved by Rad\'o  in dimension $2$ \cite{Rado}, by Moise in dimension $3$ \cite{Moise}, by Quinn in dimension $4$ \cite{Quinn} and  by Kirby \cite{Kirby} for $n\geq 5$.  See also \cite[Theorem~7.5.3, pg. 374]{DavermanVenema}.

\begin{proposition}\label{con:ball}
The $n$-dimensional closed ball satisfies the concentric rigidity property. 
\end{proposition}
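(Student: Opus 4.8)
The plan is to reduce everything to two applications of the Annulus Theorem, the only genuinely new input being that the middle manifold $M_1$ is itself an $n$-cell. Write $B=B^n$ for the $n$-cell playing the role of $M$, and let $M_0\subset\mathrm{int}(M_1)\subset M_1\subset\mathrm{int}(B)$ be as in Definition~\ref{def:con}. First I record what the hypotheses give for free. Since $M_0$ is concentric with $B$, the Remark following the definition of concentricity shows that $M_0$ is homeomorphic to $B$ --- hence $M_0$ is an $n$-cell --- and that $\partial M_0$ is bicollared. By hypothesis $\partial M_1$ is bicollared and $\partial M_1\cong\partial B=S^{n-1}$. Thus both inner boundaries are bicollared spheres, and it remains only to identify $M_1$ as an $n$-cell before feeding the data into the Annulus Theorem.

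The heart of the argument is showing that $M_1$ is an $n$-cell, and here I would invoke the generalized Schoenflies theorem of Brown and Mazur (see \cite{DavermanVenema}). Since the interior of an $n$-cell is homeomorphic to $\mathbb{R}^n$, I may regard $M_1\subset\mathrm{int}(B)\cong\mathbb{R}^n\subset S^n$, so that $\partial M_1$ becomes a bicollared $(n-1)$-sphere in $S^n$. First I would check that $M_1$ is connected: a closed $n$-manifold cannot embed as a proper subset of $S^n$, since its image would be open (invariance of domain) and closed (compactness), hence all of $S^n$; so every component of $M_1$ meets the boundary, and as $\partial M_1\cong S^{n-1}$ is connected, $M_1$ is connected. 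The sphere $\partial M_1$ splits $S^n$ into two complementary domains, and $\mathrm{int}(M_1)$ is connected, open, and closed in $S^n\setminus\partial M_1$ (its closure is the compactum $M_1$), so it is exactly one of these domains. The generalized Schoenflies theorem then gives that its closure $M_1$ is an $n$-cell.

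With $M_1$ known to be an $n$-cell, the conclusion follows from two applications of the Annulus Theorem. Applying it to the pair $M_0\subset\mathrm{int}(M_1)$ --- both $n$-cells, with $\partial M_0$ bicollared, a bicollar in $B$ being shrinkable to lie in the open set $\mathrm{int}(M_1)$ --- yields $M_1\setminus\mathrm{int}(M_0)\cong S^{n-1}\times[0,1]\cong\partial M_1\times[0,1]$, that is, $M_1$ is concentric with $M_0$. Applying it to $M_1\subset\mathrm{int}(B)$ --- both $n$-cells, $\partial M_1$ bicollared --- yields $B\setminus\mathrm{int}(M_1)\cong S^{n-1}\times[0,1]=\partial B\times[0,1]$, that is, $M_1$ is concentric with $B$. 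This establishes both concentricities and completes the proof.

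I expect the main obstacle to be precisely this middle step. The Annulus Theorem as stated presupposes that both manifolds are cells, so it cannot by itself certify that $M_1$ is a cell, and the hypotheses on $M_0$ only constrain $M_1$ from the inside; one cannot close the loop by the Annulus Theorem alone without circularity. It is the bicollaring of $\partial M_1$ that unlocks the generalized Schoenflies theorem, which is the essential external ingredient, and once $M_1$ is recognized as an $n$-cell the remainder is routine. Note also that the full strength of ``$M_0$ concentric with $M$'' is used only to know that $M_0$ is an $n$-cell with bicollared boundary.
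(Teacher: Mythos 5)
Your proof is correct and follows essentially the same route as the paper's: use the generalized Schoenflies theorem to recognize $M_1$ as an $n$-cell, then apply the Annulus Theorem twice (to the pairs $M_0\subset\mathrm{int}(M_1)$ and $M_1\subset\mathrm{int}(M)$, shrinking bicollars into the relevant open sets exactly as the paper does). The only difference is cosmetic --- you identify $\mathrm{int}(M_1)$ with a complementary domain of the bicollared sphere after passing to $S^n$, while the paper runs the same component-matching argument inside $\mathrm{int}(M)$; if anything, your explicit ruling-out of closed components of $M_1$ via invariance of domain makes the connectedness of $\mathrm{int}(M_1)$ slightly more transparent than the paper's terse version.
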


\begin{proof}

Let $M$ be the $n$-dimensional closed ball. Suppose that $M_0$ and $M_1$ are compact $n$-manifolds with boundary satisfying (1), (2) and (3) from Definition~\ref{def:con}.  Since $\partial M_1$ is a bicollared $(n-1)$-sphere contained in ${\rm int}(M)$, the generalized Sch\"{o}enflies Theorem \cite[Theorem~2.4.8, pg. 62]{DavermanVenema} ensures that $\partial M_1$ is the boundary of a topological  closed $n$-ball $B\subset {\rm int}(M)$. In addition, $\partial M_1$ decomposes $M$ into two connected components. We see that $B=M_1$.  To see this notice that
\[
M\setminus\partial M_1={\rm int}(B)\cup (M\setminus B)= {\rm int}(M_1)\cup (M\setminus M_1).
\]
 Since $\partial M_1$ and $M$ are connected and $M_1$ is compact  ${\rm int} (M_1)$ is also connected. Hence ${\rm int}(M_1)$ must be one of the components of $M\setminus\partial M_1$. The other component must be $M\setminus M_1$. Reasoning in the same fashion ${\rm int}(B)$ must be a component of $M\setminus\partial M_1$ and, hence, it must coincide either with ${\rm int} (M_1)$ or with $M\setminus M_1$.  The second possibility is excluded since $M\setminus M_1$ contains $\partial M$ while $\partial M$ is not contained in ${\rm int}(B)$. 

It remains to see that the closed topological ball $M_1$ is concentric with both $M$ and $M_0$. Since $\partial M_1$ is bicollared and $M_1\subset {\rm int}(M)$ the concentricity of $M_1$ with $M$ follows from the annulus Theorem. On the other hand, since $M_0$ is concentric with $M$, it follows that $\partial M_0$ is bicollared in $M$ and, since ${\rm int}(M_1)$ is open in $M$ and $M_0\subset {\rm int}(M_1)$,  $\partial M_0$ is also bicollared in $M_1$. Therefore a new application of the annulus Theorem ensures that $M_0$ and $M_1$ are concentric. 
\end{proof}

We see that in addition to closed balls there are many other compact manifolds with boundary that satisfy the concentric rigidity property.

\begin{proposition}\label{prop:con23}
Every compact $n$-manifold with boundary satisfies the concentric rigidity property for $n=2$ and $n=3$. 
\end{proposition}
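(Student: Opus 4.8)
The plan is to reduce the concentric rigidity property to a single statement about a bicollared codimension-one submanifold sitting inside a product region, and then to settle that statement separately for $n=2$ and $n=3$. Exploiting hypothesis (3), the concentricity of $M_0$ with $M$ means that the region $A:=M\setminus{\rm int}(M_0)$ is homeomorphic to $\partial M\times[0,1]$, a product cobordism between its two ends $\partial M$ and $\partial M_0$ (and we note $\partial M_0\cong\partial M$, since $M_0\cong M$). By hypotheses (1) and (2), $\partial M_1$ is a bicollared copy of $\partial M$ contained in ${\rm int}(A)$, and since $M_0\subset{\rm int}(M_1)\subset M_1\subset{\rm int}(M)$ it separates the end $\partial M_0$ from the end $\partial M$ inside $A$. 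Writing $A_1=M_1\setminus{\rm int}(M_0)$ and $A_2=M\setminus{\rm int}(M_1)$ for the two resulting pieces, the conclusion we want is exactly that $A_1\cong\partial M_0\times[0,1]$ and $A_2\cong\partial M\times[0,1]$: the former says that $M_1$ is concentric with $M_0$ (using $\partial M_0\cong\partial M_1$) and the latter that $M_1$ is concentric with $M$. Everything therefore reduces to the assertion that a bicollared $(n-1)$-submanifold $\Sigma$ of the product $\partial M\times[0,1]$ which is homeomorphic to $\partial M$ and separates the two ends cuts the product into two product pieces.

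For $n=2$ this can be carried out by hand. Here $\partial M$ is a finite disjoint union of, say, $k$ circles, so $A\cong\partial M\times[0,1]$ is a disjoint union of $k$ annuli, and $\Sigma$ meets each annulus in a finite family of disjoint bicollared circles. Since $\Sigma$ must separate the two ends, each annulus contains at least one essential circle of $\Sigma$; as $\Sigma\cong\partial M$ has exactly $k$ components, each annulus contains precisely one circle of $\Sigma$ and that circle is essential. A separating essential circle in an annulus is core-parallel (a consequence of the classification of surfaces, or of the two-dimensional annulus Theorem), so it splits the annulus into two sub-annuli; reassembling over the $k$ annuli exhibits $A_1$ and $A_2$ as the required products.

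For $n=3$ this is the substantial case, and it coincides with the concentricity phenomenon for $3$-manifolds studied by Edwards \cite{Ed1,Ed2}. By Moise's theorem \cite{Moiseb} the compact $3$-manifold $M$ admits a triangulation, which is automatically combinatorial, and the bicollaredness of $\partial M_0$ and $\partial M_1$, via the tameness criterion \cite[Lemma~5]{SGNLA}, allows me to arrange the whole nested configuration $M_0\subset{\rm int}(M_1)\subset M_1\subset{\rm int}(M)$ in the PL category. In this setting Edwards' concentricity theorem applies and yields that $\partial M_1$ is parallel to the ends of $A$, so that $A_1$ and $A_2$ are products and $M_1$ is concentric with both $M_0$ and $M$. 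I expect this to be the main obstacle of the argument: unlike the surface case there is no elementary classification, and one must exclude both a possible knotting of $\partial M_1$ inside $\partial M\times[0,1]$ and the possibility that $\partial M_1$, although abstractly homeomorphic to $\partial M$, is compressible and therefore fails to be parallel to the ends. Ruling these out is precisely the incompressibility-and-uniqueness-of-product input (of Waldhausen type) encapsulated in Edwards' results, and once it is in hand the recognition of $A_1$ and $A_2$ as products finishes the proof.
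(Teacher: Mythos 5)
Your proposal is correct and follows essentially the same route as the paper: for $n=2$ you decompose $M\setminus{\rm int}(M_0)$ into annuli, locate exactly one bicollared essential circle of $\partial M_1$ in each, and apply the two-dimensional annulus Theorem componentwise, while for $n=3$ you pass to the PL setting via Moise, obtain tameness from the bicollared boundaries using \cite[Lemma~5]{SGNLA}, and invoke Edwards' concentricity theorem \cite{Ed2}, exactly as the paper does. Your reformulation in terms of a separating copy of $\partial M$ inside the product region $\partial M\times[0,1]$ is only a cosmetic repackaging of the same argument.
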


\begin{proof}
Let $M$ be a compact $n$-manifold with boundary and suppose that $M_0$ and $M_1$ are compact $n$-manifolds with boundary satisfying (1), (2) and (3) from Definition~\ref{def:con}. 

Suppose that $n=2$. Since $M$ and $M_0$ are concentric it follows that $M\setminus {\rm int}(M_0)$ is homeomorphic to a finite disjoint union of closed annuli. Let $A$ be a component of $M\setminus {\rm int}(M_0)$. Condition (1) ensures that the closed annulus $A$ contains exactly one component of $\partial M_1$ and that this component separates the two boundary components of $A$. Then invoking the $2$-dimensional annulus Theorem in each component of $M\setminus {\rm int}(M_0)$ ensures that $M_1$ is concentric with both $M_0$ and $M$.

Suppose that $n=3$. Since $M_0$ is concentric with $M$ it follows that $\partial M_0$ is bicollared in $M$ and, since ${\rm int}(M_1)$ is open in $M$, $\partial M_0$ is also bicollared in $M_1$. In addition, $\partial M_1$ is bicollared by assumption. Since $M_0$ has bicollared boundary in $M_1$ and $M_1$ has bicollared boundary in $M_2$, \cite[Lemma~5]{SGNLA} guarantees that $M_0$ is tame in ${\rm int}(M_1)$ and $M_1$ is tame in ${\rm int}(M)$. The result follows from \cite[Theorem~2]{Ed2}.
\end{proof}

\section{Topological spines}\label{sec:3}

In this section we introduce the concept of topological spine and study some of its properties. In addition we see some examples of attractors of discrete dynamical systems that are not topological spines.

\begin{definition}
Let $N$ be a compact $n$-manifold with boundary. We say that a compactum $K\subset {\rm int}(N)$ is a \emph{topological spine} of $N$ if $N\setminus K$ is homeomorphic to $\partial N\times[0,+\infty)$. 
\end{definition}

There are some other definitions of spine in the literature. See for instance \cite{HudZee} or \cite{GabSanjGl}. The following result gives a characterization of spines in terms of bases of neighborhoods.

\begin{proposition}\label{prop:con}
Suppose that $N$ is a compact $n$-manifold with boundary. A compactum $K\subset{\rm int}(N)$ is a topological spine of $N$ if and only if $K$ possesses a basis of neighborhoods $\{N_k\}_{k\geq 0}$ comprised of compact $n$-manifolds with boundary satisfying
\begin{enumerate}
\item[(1)] $N_0=N$.
\item[(2)] $N_{k+1}\subset{\rm int}(N_k)$.
\item[(3)] $N_{k}$ and $N_{k+1}$ are concentric.
\end{enumerate}
\end{proposition}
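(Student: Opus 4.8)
The plan is to prove the two implications separately, in both cases exploiting the fact that concentricity lets one decompose the relevant region into a stack of collars.

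For the forward implication, suppose $K$ is a topological spine and fix a homeomorphism $\phi:\partial N\times[0,+\infty)\to N\setminus K$, which necessarily carries the boundary $\partial N\times\{0\}$ onto $\partial N$. I would set $N_0=N$ and, for $k\geq 1$,
\[
N_k=K\cup\phi(\partial N\times[k,+\infty))=N\setminus\phi(\partial N\times[0,k)).
\]
The verifications are then routine: removing the half-open collar $\phi(\partial N\times[0,k))$ from $N$ leaves a compact $n$-manifold with boundary $\phi(\partial N\times\{k\})\cong\partial N$; condition (2) holds because $N_{k+1}\subset K\cup\phi(\partial N\times(k,+\infty))={\rm int}(N_k)$; and $N_k\setminus{\rm int}(N_{k+1})=\phi(\partial N\times[k,k+1])\cong\partial N_k\times[0,1]$, which is condition (3). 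That $\{N_k\}$ is a basis of neighborhoods of $K$ follows from $\bigcap_k N_k=K$ (since $\bigcap_k[k,+\infty)=\emptyset$ and $\phi$ maps into $N\setminus K$) together with a standard compactness argument: given an open $U\supset K$, the nested compacta $N_k\setminus U$ have empty intersection, so $N_k\subset U$ for some $k$.

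For the converse, assume $\{N_k\}$ satisfies (1)--(3). First I would record that $\bigcap_k N_k=K$ (in the metric space $N$ a point outside the compactum $K$ is excluded by some neighborhood, hence by some $N_k$), and that the annular pieces $A_k=N_k\setminus{\rm int}(N_{k+1})$ cover $N\setminus K$ with $A_k\cap A_{k+1}=\partial N_{k+1}$ and $A_k\cap K=\emptyset$. Concentricity (3) supplies homeomorphisms $\psi_k:\partial N_k\times[0,1]\to A_k$ carrying $\partial N_k\times\{0\}$ onto $\partial N_k$ and $\partial N_k\times\{1\}$ onto $\partial N_{k+1}$; in particular every $\partial N_k$ is homeomorphic to $\partial N$.

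The heart of the matter is to glue the $\psi_k$ into a single homeomorphism $h:\partial N\times[0,+\infty)\to N\setminus K$. I would build $h$ inductively on the slabs $\partial N\times[k,k+1]$, at each stage pre-composing $\psi_k$ with a self-homeomorphism of $\partial N$ chosen so that the face $\partial N\times\{k\}$ is sent to the same set, by the same formula, as the top face produced at the previous step; this forces agreement on the shared slices, and the pasting lemma (the cover by slabs being locally finite) yields a continuous bijection onto $\bigcup_k A_k=N\setminus K$. The main obstacle, and the only genuinely non-formal point, is to confirm that this continuous bijection is a homeomorphism, since neither side is compact. I would settle this by properness: the open sets $N\setminus N_k$ increase to $N\setminus K$, so any compactum $C\subset N\setminus K$ lies in some $N\setminus N_k\subset N_0\setminus{\rm int}(N_k)=h(\partial N\times[0,k])$, whence $h^{-1}(C)$ is a closed subset of the compact slab $\partial N\times[0,k]$ and therefore compact. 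A proper continuous bijection between locally compact Hausdorff spaces is a homeomorphism, so $N\setminus K\cong\partial N\times[0,+\infty)$ and $K$ is a topological spine. Equivalently, one may note that $h$ restricts to a continuous bijection, hence a homeomorphism, between the compacta $\partial N\times[0,k]$ and $N_0\setminus{\rm int}(N_k)$ for every $k$, and pass to the union.
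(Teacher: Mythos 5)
Your proposal is correct and follows essentially the same route as the paper: in the forward direction you take the identical neighborhoods $N_k=K\cup\phi(\partial N\times[k,+\infty))$, and in the converse you glue the concentricity homeomorphisms slab by slab, adjusting by self-homeomorphisms so that successive pieces agree on the shared boundary slices, exactly as the paper does with its maps $h_k$ and the shift $H$. The only differences are matters of explicitness that slightly exceed the paper's level of detail: you verify the basis-of-neighborhoods property in the forward direction, and you replace the paper's unproved claim that $h_\infty$ is a ``continuous and open bijection'' with a clean properness (or restriction-to-compacta) argument.
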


\begin{proof}
Since $K$ is a spine of $N$ there exists a homeomorphism $h:\partial N\times[0,+\infty)\longrightarrow N\setminus K$. For every $k\geq 0$ we define
\[
N_k=K\cup h(\partial N\times [k,+\infty)).
\]
 We see that $N_k$ is a compact neighborhood of $K$ for every $k>0$. The compactness of of $N_k$ follows from the fact that $N\setminus N_k=h(\partial N\times[0,k))$ is open in $N\setminus K$ and,  thus, in $N$. Hence, $N_k$ is closed in $N$ and, therefore, compact. $N_k$ is a neighorbood of $K$ since  $K\subset N_k\setminus h(\partial{N}\times\{k\})\subset N_k$ and $N_k\setminus h(\partial{N}\times\{k\})$ is open being the complement of the compact set $h(\partial N\times[0,k])$. Notice that this also ensures that $N_k$ is a $n$-manifold with boundary  $\partial N_k=h(\partial N\times\{k\})$. Hence $N_{k+1}\subset{\rm int}(N_{k})$. The concentricity of $N_k$ and $N_{k+1}$ follows from the fact that
 \[
N_{k}\setminus {\rm int}(N_{k+1})=h(\partial N\times[k,k+1]).
 \]
 
Conversely, suppose that $\{N_k\}$ is a basis of neighborhoods of $K$ comprised of compact $n$-manifolds with boundary satisfying (1), (2) and (3). First observe that
\[
N\setminus K=\bigcup_{k\geq 0}(N\setminus{\rm int} (N_{k+1}))
\]
We construct step by step a homeomorphism $N\setminus K$ to $\partial N\times[0,+\infty)$ as follows. Since $N$ is concentric with $N_1$ there exists a homeomorphism $h_1:N\setminus{\rm int}(N_1)\longrightarrow\partial N\times [0,1]$  such that $h_1(x)=(x,0)$ for every $x\in\partial N$. Reasoning in the same fashion, we can find a homeomorphism $\bar{h}_2:N_1\setminus{\rm int}(N_2)\longrightarrow \partial N_1\times[0,1]$ such that $\bar{h}_2(x)=(x,0)$ for every $x\in\partial N_1$. Let $H:\partial N_1\times[0,1]\to \partial N\times[1,2]$ be the homeomorphism given by $H(x,t)=(h_1|_{\partial N_1}(x),t+1)$. Then, the map $h_2:N\setminus{\rm int}(N_2)\longrightarrow \partial N\times[0,2]$ defined as
\[
h_2(x)=
\begin{cases}
h_1(x) & \mbox{if}\; x\in N\setminus N_1\\
H\circ \bar{h}_2(x) & \mbox{if}\; x\in N_1\setminus{\rm int}(N_2)
\end{cases}
\]
is a homeomorphism that extends $h_1$. If we continue this process we are able to construct, for each $k$, a homeomorphism $h_k:N\setminus{\rm int}(N_k)\longrightarrow \partial N\times[0,k]$ that extends $h_{k-1}$. We define $h_\infty:N\setminus K\longrightarrow \partial N\times[0,+\infty)$ as  $h_\infty(x)=h_k(x)$ if $x\in N\setminus N_{k}$. It is not difficult to see that $h_\infty$ is a continuous and open bijection and, hence, a homeomorphism.
\end{proof}

\begin{remark}
A direct consequence of Proposition~\ref{prop:con} is that the compactum $K$ is a spine of $N_k$ for every $k\geq 0$.
\end{remark}

\begin{corollary}\label{cor:shape}
Let $N$ be a compact $n$-manifold with boundary and suppose that $K$ is a topological spine of $N$. Then the inclusion $i:K\hookrightarrow N$ is a shape equivalence.
\end{corollary}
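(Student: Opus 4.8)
The plan is to combine the neighborhood characterization from Proposition~\ref{prop:con} with the shape equivalence criterion recalled in the preliminaries (from \cite[Theorem~6]{SGRAC}). Since $K$ is a topological spine of $N$, Proposition~\ref{prop:con} furnishes a basis of neighborhoods $\{N_k\}_{k\geq 0}$ of $K$, consisting of compact $n$-manifolds with boundary, such that $N_0=N$, $N_{k+1}\subset{\rm int}(N_k)$, and $N_k$ is concentric with $N_{k+1}$ for every $k$. Because $\{N_k\}$ is a basis of neighborhoods of the compactum $K$ and the sets are nested, we have $K=\bigcap_{k\geq 0}N_k$. This puts us exactly in the setting of the cited criterion, provided we can show that each inclusion $i_k:N_k\hookrightarrow N_0$ is a shape equivalence.

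The key step is to observe that each inclusion $N_{k+1}\hookrightarrow N_k$ is a homotopy equivalence. Indeed, concentricity of $N_k$ and $N_{k+1}$ means, by definition, that $N_k\setminus{\rm int}(N_{k+1})$ is homeomorphic to $\partial N_k\times[0,1]$, with $\partial N_k$ corresponding to $\partial N_k\times\{0\}$ and $\partial N_{k+1}$ to $\partial N_k\times\{1\}$. Thus $N_k$ is obtained from $N_{k+1}$ by attaching an exterior collar along $\partial N_{k+1}$, and pushing this collar onto $\partial N_{k+1}$ yields a deformation retraction of $N_k$ onto $N_{k+1}$. Hence the inclusion $N_{k+1}\hookrightarrow N_k$ is a homotopy equivalence.

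Composing these homotopy equivalences along the chain $N_k\hookrightarrow N_{k-1}\hookrightarrow\cdots\hookrightarrow N_0$ shows that $i_k:N_k\hookrightarrow N_0$ is a homotopy equivalence for every $k$. Since compact manifolds with boundary are ANRs, a homotopy equivalence between them is in particular a shape equivalence. Therefore every inclusion $i_k:N_k\hookrightarrow N_0$ is a shape equivalence, and the criterion from \cite[Theorem~6]{SGRAC} applies directly to conclude that the inclusion $i:K\hookrightarrow N_0=N$ is a shape equivalence.

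The only delicate point is the passage from concentricity to the deformation retraction $N_k\searrow N_{k+1}$; once the collar structure is exploited this is routine, and the remaining work is merely the bookkeeping of composing equivalences and invoking the inverse-limit shape criterion. I expect no substantial obstacle beyond making the collar retraction explicit.
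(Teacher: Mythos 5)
Your proposal is correct and follows essentially the same route as the paper's own proof: both extract the concentric neighborhood basis $\{N_k\}$ from Proposition~\ref{prop:con}, use the collar structure of $N_k\setminus{\rm int}(N_{k+1})$ to produce a (strong) deformation retraction of $N_k$ onto $N_{k+1}$, compose to get that each $i_k:N_k\hookrightarrow N$ is a homotopy (hence shape) equivalence, and conclude via the criterion of \cite[Theorem~6]{SGRAC}. The only cosmetic difference is that the paper leaves the appeal to that criterion implicit in its final sentence, while you invoke it explicitly.
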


\begin{proof}
Since $K$ is a spine of $N$ it possesses a basis of neighborhoods $\{N_k\}$ comprised of compact $n$-manifolds with boundary satisfying conditions (1), (2) and (3) of  Proposition~\ref{prop:con}.  Since $N_{k+1}\subset {\rm int}(N_k)$ and $N_k$ and $N_{k+1}$ are concentric, it follows that there exists a strong deformation retraction from $N_k\setminus{\rm int}(N_{k+1})$ onto $\partial N_{k+1}$. This strong deformation retraction extends to a strong deformation retraction from $N_k$ onto $N_{k+1}$ by keeping all the points of ${\rm int} (N_{k+1})$ fixed. Therefore for every $k\geq 0$ the inclusion $i_{k+1,k}:N_{k+1}\hookrightarrow N_k$ is a homotopy equivalence. As a consequence, the inclusion $i_{k}:N_{k}\hookrightarrow N$ is also homotopy equivalence for every $k\geq 0$ and, hence, a shape equivalence. Then the inclusion $i: K\hookrightarrow N$ is a shape equivalence.
\end{proof}

It can be seen using smooth Lyapunov functions that every attractor of a smooth flow on a smooth $n$-dimensional manifold $M$ is a topological spine of some compact $n$-manifold with boundary $N$ with $N\subset M$. The following examples show that this does not hold in general for attractors of homeomorphisms. 

\begin{example}\label{ex:1}
Consider the homeomorphism $f:\mathbb{R}^3\longrightarrow\mathbb{R}^3$ that carries the standard solid torus $T$ in $\mathbb{R}^3$  into its interior as depicted in Figure~\ref{fig:solenoid}. The compactum
\[
K=\bigcap_{n\geq 0} f^{n}(T)
\]
is an attractor of $f$ known as the \emph{dyadic solenoid}. We see that $K$ cannot be a topological spine of any compact $d$-dimensional manifold with boundary $N$. Suppose, arguing by contradiction, that there exists a compact $d$-dimensional manifold $N$ having $K$ as a topological spine. Then Corollary~\ref{cor:shape} ensures that $i:K\hookrightarrow N$ is a shape equivalence. Since shape equivalences induce isomorphisms in \v Cech cohomology it follows that 
\[
\check{H}^*(K;\mathbb{Z})\cong\check{H}^*(N;\mathbb{Z})\cong H^*(N;\mathbb{Z}).
\]
The last isomorphism holds because $N$ is a manifold with boundary. Since $N$ is compact $H^*(N;\mathbb{Z})$ is finitely generated in every dimension and, hence, so is $\check{H}^*(K;\mathbb{Z})$.  Consider the family $\{T_n\}_{n\geq 0}$ where $T_n=f^n(T)$. The construction and the continuity property of \v Cech cohomology ensure that
\[
\check{H}^1(K;\mathbb{Z})\cong\varinjlim H^1(T_n;\mathbb{Z})
\]
where the bonding maps are the homomorphisms induced by the inclusions $i_{n,n+1}: T_{n+1}\hookrightarrow T_n$. For each $n\geq 0$ the set  $T_n$ is a solid torus and hence $H^1(T_n;\mathbb{Z})$ is isomorphic to $\mathbb{Z}$. In addition, for each $n\geq 0$ the solid torus $T_{n+1}$ winds two times around  $T_n$ and, thus, all the bonding maps are the multiplication by $2$.  Since $\check{H}^1(K;\mathbb{Z})$ is finitely generated there must be some $n_0\geq 0$ and cohomology classes $w_1,\ldots, w_k\in H^1(T_{n_0};\mathbb{Z})$ whose images in $\check{H}^1(K;\mathbb{Z})$ generate $\check{H}^1(K;\mathbb{Z})$. Taking into account that every bonding map is injective it follows that every bonding map is also surjective for $n\geq n_0$. This is in contradiction with all the bonding maps being the multiplication by $2$. Therefore $K$ cannot be a spine of any compact $d$-manifold with boundary.

\begin{figure}
\begin{center}
\includegraphics[scale=1]{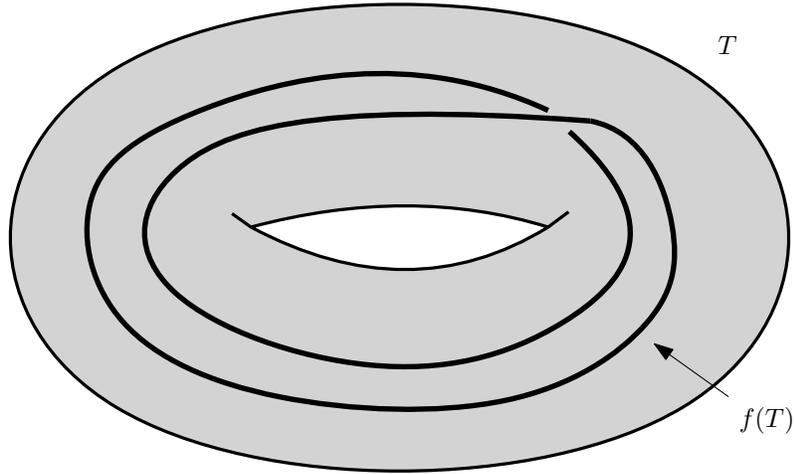}
\label{fig:solenoid}
\caption{Construction of the dyadic solenoid as an attractor of a homeomorphism of $\mathbb{R}^3$.}
\end{center}
\end{figure}

\end{example}

\begin{example}\label{ex:2}
In a similar way as in Example~\ref{ex:1} we consider a homeomorphism  $f:\mathbb{R}^3\longrightarrow\mathbb{R}^3$ that carries the standard solid torus $T$ in $\mathbb{R}^3$ into its interior as depicted in Figure~\ref{fig:whitehead}. The attractor $K$ determined by $T$  is known as the the \emph{Whitehead continuum}.  Since the solid torus $f(T)$ is contractible in $T$ it follows that each solid torus $f^{n+1}(T)$ is contractible in $f^n(T)$ and, as a consequence, $K$ has the Borsuk's homotopy type (shape) of a point. In particular, $\check{H}^*(K;\mathbb{Z})$ is finitely generated in every dimension. In spite of this $K$ is not a topological spine of any compact $3$-manifold with boundary $N\subset\mathbb{R}^3$ since if it were \cite[Theorem~4]{SGNLA} would ensure the existence of a flow in $\mathbb{R}^3$ having $K$ as an attractor. However it follows from \cite[Example~47]{SGball} that $K$ cannot be an attractor of a flow in $\mathbb{R}^3$.

\begin{figure}
\begin{center}
\includegraphics[scale=1]{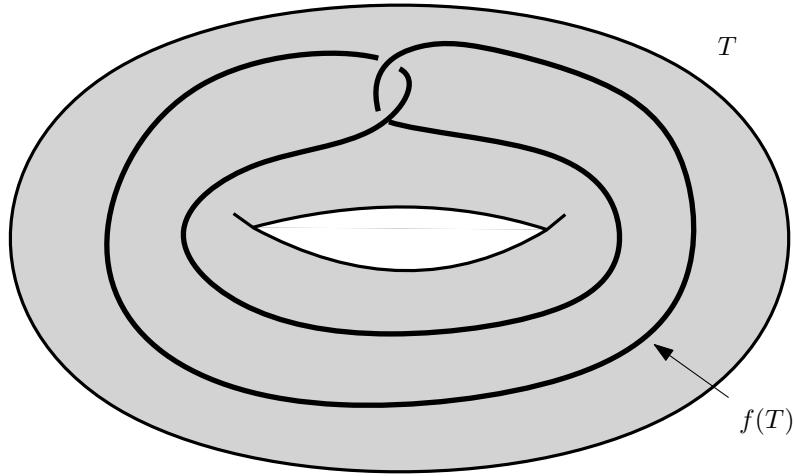}
\label{fig:whitehead}
\caption{Construction of the Whitehead continuum as an attractor of a homeomorphism of $\mathbb{R}^3$.}
\end{center}
\end{figure}

\end{example}

\section{Applications of the Main Theorem}\label{sec:4}

In this section we study Hopf bifurcations of fixed points and Hopf bifurcations of attractors in dimensions $2$ and $3$.

\subsection{Hopf bifurcations of fixed points}

\begin{theorem}\label{thm:fixed}
 Suppose that $p$ is an asymptotically stable fixed point for $f_0$ that undergoes a Hopf bifurcation. Then, there exists $\lambda_0>0$ such that for every $\lambda$ with $0<\lambda<\lambda_0$ there exists an attractor $K_\lambda$ which has the Borsuk's homotopy type (shape) of $S^{n-1}$ and converges to $\{p\}$ upper semicontinuosly as $\lambda$ tends to $0$.
\end{theorem}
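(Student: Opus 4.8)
The plan is to obtain this theorem as a direct specialization of the Main Theorem, taking for the attractor $K$ the singleton $\{p\}$ and for the ambient manifold $N$ a small closed $n$-ball centered at $p$. First I would check hypothesis (1): asymptotic stability of $p$ for $f_0$ means that $p$ is Lyapunov stable and attracting, so some neighborhood $U$ of $p$ satisfies $\omega(U)=\{p\}$; hence $\{p\}$ is an attractor of $f_0$ in the sense of the preliminaries.

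Next I would produce the manifold $N$ and verify hypothesis (2). Since $p$ is an interior point of $M$, it possesses a Euclidean chart $\varphi\colon V\to\mathbb{R}^n$ with $\varphi(p)=0$; I set $N=\varphi^{-1}(\bar{B})$, where $\bar{B}$ is a closed ball centered at the origin with $\bar{B}\subset\varphi(V)$. Then $N$ is an $n$-cell containing $p$ in its interior, so by Proposition~\ref{con:ball} it satisfies the concentric rigidity property, and $\partial N\cong S^{n-1}$. To see that $\{p\}$ is a topological spine of $N$, I would pass to radial coordinates, identifying $N\setminus\{p\}$ with $S^{n-1}\times(0,1]$, and then compose with a homeomorphism $(0,1]\to[0,+\infty)$ (for instance $r\mapsto r^{-1}-1$) to obtain a homeomorphism $N\setminus\{p\}\cong\partial N\times[0,+\infty)$. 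This is precisely the defining condition for a topological spine.

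Hypothesis (3) is immediate from the statement: to say that $p$ undergoes a Hopf bifurcation is to say that $\{p\}$ is a repeller of $f_\lambda$ for every $\lambda>0$. With (1), (2) and (3) verified, the Main Theorem provides $\lambda_0>0$ and, for each $\lambda$ with $0<\lambda<\lambda_0$, an attractor $K_\lambda$ having the Borsuk's homotopy type of $\partial N\cong S^{n-1}$ and converging upper semicontinuously to $\{p\}$ as $\lambda\to 0$, which is exactly the asserted conclusion.

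I do not anticipate a substantive obstacle, since the whole content is already packaged in the Main Theorem; the only steps requiring care are the explicit verification that the center of a ball is a topological spine and the appeal to Proposition~\ref{con:ball} identifying the $n$-cell as concentrically rigid.
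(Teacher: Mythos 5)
Your proposal is correct and follows essentially the same route as the paper, which deduces the theorem from the Main Theorem by observing that $p$ is a spine of an $n$-cell contained in $M$ and that $n$-cells satisfy the concentric rigidity property (Proposition~\ref{con:ball}). Your explicit verifications --- that asymptotic stability gives $\omega(U)=\{p\}$, and that radial coordinates exhibit $N\setminus\{p\}$ as $\partial N\times[0,+\infty)$ --- merely fill in details the paper leaves implicit.
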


\begin{proof}
This results follows from the Main Theorem taking into account that $p$ is a spine of an $n$-cell contained in $M$ and that $n$-cells have the concentric rigidity property.
\end{proof}

\begin{remark}
Theorem~\ref{thm:fixed} also holds if we substitute the fixed point by a cellular attractor.
\end{remark}

\subsection{Hopf bifurcations in $2$-manifolds}

\begin{theorem}\label{thm:2man}
Suppose that $M$ is a connected $2$-manifold and $K\subsetneq M$ is an attractor of $f_0$ that undergoes a Hopf bifurcation at $\lambda=0$. Then, there exists $\lambda_0>0$ such that for every $\lambda$ with $0<\lambda<\lambda_0$ there exists an attractor $K_\lambda$ that has the Borsuk's homotopy type (shape) of a disjoint union of a finite number of circles and converges to $K$ upper semicontinuosly as $\lambda$ tends to $0$. In particular, if $M$ is either $\mathbb{R}^2$ or $S^2$ and $K$ is connected the number of components of $K_\lambda$ coincides with the number of components of $M\setminus K$. 
\end{theorem}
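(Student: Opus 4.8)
The plan is to deduce Theorem~\ref{thm:2man} directly from the Main Theorem, so the core of the argument is verifying that the three hypotheses of the Main Theorem are met and then identifying the resulting shape. First I would construct a compact $2$-manifold with boundary $N\subset M$ having $K$ as a topological spine. Since $K$ is an attractor of $f_0$ in a $2$-manifold, I would invoke the general results about attractors from Section~\ref{sec:5} to produce a trapping region, and then upgrade it to a compact surface with boundary $N$ of which $K$ is a spine. The $2$-dimensional case is favorable because every manifold of dimension at most three is triangulable (Rad\'o) and combinatorial, so the attractor sits inside a smooth/PL regular neighborhood that retracts onto it; this is where one builds $N$ so that $N\setminus K\cong\partial N\times[0,+\infty)$.

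Next I would observe that hypothesis (2) of the Main Theorem is automatic here: by Proposition~\ref{prop:con23} every compact $2$-manifold with boundary satisfies the concentric rigidity property, so whatever $N$ we produce qualifies. Hypothesis (3) is given, and hypothesis (1) is given. The Main Theorem then yields, for small $\lambda>0$, an attractor $K_\lambda$ with the shape of $\partial N$. Since $\partial N$ is the boundary of a compact $2$-manifold, it is a closed $1$-manifold, hence a finite disjoint union of circles. This already gives the first assertion, because having the shape of $\partial N$ means having the shape of a finite disjoint union of circles.

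The remaining task is the counting statement when $M$ is $\mathbb{R}^2$ or $S^2$ and $K$ is connected. Here I would use the classification of plane continua by Borsuk recalled in the preliminaries: two continua in $\mathbb{R}^2$ have the same shape exactly when they separate the plane into the same number of components. By Corollary~\ref{cor:shape}, $K$ and $N$ (and hence $\partial N$ up to the shape relation) share shape information, and since $K$ is connected the number of circles in $\partial N$ should equal the number of complementary components of $K$ in $M$. Concretely, I would argue that each bounded complementary component of $K$ that is ``filled in'' by $N$ contributes, together with the unbounded component, to boundary circles, and match this count with the number of components of $M\setminus K$; for $S^2$ one removes the point at infinity or uses Alexander duality to relate $\check H^1(K)$ to the reduced $\check H^0$ of the complement. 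The main obstacle will be making the construction of $N$ precise and canonical enough that the count of boundary circles is forced to equal the number of components of $M\setminus K$, rather than merely bounding it; this requires showing that the spine neighborhood $N$ can be taken so that each component of $\partial N$ corresponds to exactly one complementary component of $K$, which is where the connectedness of $K$ and the low-dimensional topology (tameness, bicollaring) do the real work.
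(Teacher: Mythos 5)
Your overall architecture matches the paper's: verify the hypotheses of the Main Theorem and identify $\partial N$. However, your construction of $N$ has a genuine gap. You propose to build $N$ by triangulating $M$ and taking a PL/smooth regular neighborhood of $K$ that retracts onto it. Regular neighborhood theory applies only to \emph{polyhedral} (tame) subsets, and an attractor of a homeomorphism of a $2$-manifold need not be anywhere close to polyhedral --- it can be a wild continuum such as Handel's pseudo-circle attractor. Moreover, even granting a surface neighborhood $N$ of $K$, it is not automatic that $K$ is a spine of $N$, i.e.\ that $N\setminus K\cong\partial N\times[0,+\infty)$; the paper's Examples~\ref{ex:1} and~\ref{ex:2} (solenoid, Whitehead continuum) show that attractors of homeomorphisms can fail to be spines of \emph{any} compact manifold with boundary, so this step cannot be waved through. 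Nothing in Section~\ref{sec:5} produces spines either: that section only proves the characterization of attractors (Proposition~\ref{at1}) and the transitivity statement (Proposition~\ref{at2}). What actually makes dimension two work in the paper is dynamical--cohomological input: by \cite[Theorem~1]{RSG} the attractor $K$ has finitely generated $\check{H}^k(K;\mathbb{Z}_2)$ for $k=0,1$, and then \cite[Lemma~B.3]{SGth} and \cite[Lemma~B.5]{SGth} yield a basis of neighborhoods of $K$ by compact surfaces satisfying conditions (2) and (3) of Proposition~\ref{prop:con}, which is precisely the characterization of $K$ being a spine of $N=N_0$. Your proposal contains no substitute for this finiteness argument.

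Your counting step is also left open, as you yourself flag: you only indicate that the number of boundary circles ``should'' match the number of complementary components. The paper closes this as follows: by Alexander duality $K$ separates $M\in\{\mathbb{R}^2,S^2\}$ into a finite number $k$ of components; since $K$ is connected, Borsuk's classification gives that $K$ has the shape of a wedge of $k-1$ circles; since $i:K\hookrightarrow N$ is a shape equivalence and $N$ is an ANR, $N$ has the \emph{homotopy type} of a wedge of $k-1$ circles; finally --- and this is the step your sketch is missing --- a compact $2$-manifold with boundary embedded in $S^2$ (hence planar) with that homotopy type must be homeomorphic to a $2$-sphere with the interiors of $k$ disjoint disks removed, which forces $\partial N$ to have exactly $k$ circle components. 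It is this classification of planar surfaces, not a component-by-component bookkeeping of how $N$ fills in complementary regions of $K$, that makes the count an equality rather than a bound.
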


\begin{proof}
Since $K$ is an attractor of $f_0$ \cite[Theorem~1]{RSG} ensures that $\check{H}^k(K;\mathbb{Z}_2)$ is finitely generated for $k=0,1$. Then \cite[Lemma~B.3]{SGth} together with \cite[Lemma~B.5]{SGth} ensure that $K$ has a basis of neighborhoods $\{N_k\}_{k\geq 0}$ satisfying  the conditions (2) and (3) of Proposition~\ref{prop:con} and, as a consequence, $K$ is a spine of $N=N_0$. Since compact $2$-manifolds with boundary satisfy the concenctric rigidity property by Proposition~\ref{prop:con23} the Main Theorem ensures the existence of $\lambda_0>0$ such that for $0<\lambda<\lambda_0$ there exists an attractor $K_\lambda$ with the shape of $\partial N$ converging to $K$ upper semicontinuously as $\lambda\to 0$. Since $N$ is a compact $2$-manifold with boundary it follows that $\partial N$ is a disjoint union of a finite number of topological circles.  If $M$ is either $\mathbb{R}^2$ or $S^2$ by Alexander duality $K$ separates $M$ into a finite number $k$ of components and if $K$ is connected Borsuk's characterization of the shape of plane continua ensures  that $K$ has the shape of a wedge of $k-1$ circles. Then, since $N$ is a compact $2$-manifold with boundary and $N$ and  $K$ have the same shape, $N$ has the homotopy type of a wedge of $k-1$ circles.  Taking into account that $N$ is contained in $M$ it follows that $N$ is homeomorphic to a $2$-sphere with the interiors of $k$ disjoint topological disks removed. Therefore $\partial N$ is homeomorphic to the disjoint union of $k$ circles.

\end{proof}

\subsection{Hopf bifurcations in $3$-manifolds}

Let $M$ be a $3$-manifold and $K\subset M$ a compactum. For each $x\in K$  consider the inclusion $i_x:(M,M\setminus K)\hookrightarrow (M,M\setminus\{x\})$. We say that $K$ \emph{preserves orientation} if there exists a homology class $\alpha_K\in H_3(M,M\setminus K;\mathbb{Z})$ such that  $i_{x*}(\alpha_K)$ is a generator of $H_3(M,M\setminus\{x\};\mathbb{Z})\cong\mathbb{Z}$ for each $x\in K$. Otherwise we say that $K$ \emph{reverses orientation}.

\begin{theorem}\label{thm:knot}
Suppose that $M$ is a $3$-manifold and $K\subset M$ is a tame knot that is an attractor for $f_0$. Suppose that $K$ undergoes a Hopf bifurcation at $\lambda=0$. Then, there exists $\lambda_0>0$ such that for every $\lambda$ with $0<\lambda<\lambda_0$ there exists an attractor $K_\lambda$ that has  the Borsuk's homotopy type (shape) of the torus if $K$ preserves orientation or the Borsuk's homotopy type of the Klein bottle if $K$ reverses orientation. Moreover, $K_\lambda$ converges to $K$ upper semicontinuosly as $\lambda$ tends to $0$. 
\end{theorem}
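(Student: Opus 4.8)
The plan is to deduce the statement from the Main Theorem: hypotheses (1) and (3) hold by assumption, so all the work consists in producing a compact $3$-manifold with boundary $N\subset M$ that has $K$ as a topological spine and satisfies the concentric rigidity property, and then in identifying $\partial N$. First I would fix a combinatorial triangulation of $M$ (which exists by Moise's theorem) in which, by tameness of the knot $K$, the circle $K$ is realized as a subcomplex. Taking $N$ to be a regular neighborhood of $K$ in this triangulation, PL regular neighborhood theory gives that $N$ is a compact $3$-manifold with boundary, that $K\subset{\rm int}(N)$, that $\partial N$ is bicollared, and, since $K$ is a $1$-manifold, that $N$ is a $D^2$-bundle over $S^1$ having $K$ as its zero section.

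There are exactly two such bundles up to homeomorphism: the orientable one, the solid torus $S^1\times D^2$, and the non-orientable one, the solid Klein bottle. In either case $K$ is a topological spine of $N$: writing $N$ as the disk bundle and $K$ as its zero section, the fiberwise homeomorphism $D^2\setminus\{0\}\cong S^1\times(0,1]$ intertwines the monodromy with its restriction to $\partial D^2$, so that
\[
N\setminus K\cong\partial N\times(0,1]\cong\partial N\times[0,+\infty),
\]
which is precisely the spine condition. Since $N$ is a compact $3$-manifold with boundary, Proposition~\ref{prop:con23} guarantees that it satisfies the concentric rigidity property, so all three hypotheses of the Main Theorem hold and we obtain an attractor $K_\lambda$ with the shape of $\partial N$ converging upper semicontinuously to $K$. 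Now $\partial N$ is the torus when $N$ is the solid torus and the Klein bottle when $N$ is the solid Klein bottle, so it only remains to match these two cases with the orientation behaviour of $K$.

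For this last point I would use excision to identify $H_3(M,M\setminus K;\mathbb{Z})\cong H_3(N,N\setminus K;\mathbb{Z})$, and then the deformation retraction of $N\setminus K$ onto $\partial N$ to rewrite this as $H_3(N,\partial N;\mathbb{Z})$. By Lefschetz duality this group is $\mathbb{Z}$ when $N$ is orientable (the solid torus), generated by the fundamental class $[N,\partial N]$, whose image in $H_3(M,M\setminus\{x\};\mathbb{Z})$ is a generator for every $x\in K$; this is exactly a class $\alpha_K$ witnessing that $K$ preserves orientation. When $N$ is non-orientable (the solid Klein bottle) the same group vanishes, so no such $\alpha_K$ can exist and $K$ reverses orientation. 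Hence $K$ preserves orientation precisely when $\partial N$ is a torus and reverses it precisely when $\partial N$ is a Klein bottle, completing the argument. The step I expect to be most delicate is this final correspondence: one must verify that the homological definition of preserving/reversing orientation (the existence of a coherent class restricting to local generators along all of $K$) is genuinely equivalent to the orientability of the regular neighborhood, and that excision and Lefschetz duality are applied to the correct pairs. By comparison, the construction of $N$ and the verification of the spine and rigidity conditions are routine consequences of the results already established.
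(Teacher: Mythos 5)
Your argument is correct and delivers the theorem, but it takes a genuinely different route from the paper in the two places where all the work lies. For the construction and identification of $N$, the paper never invokes the classification of disk bundles: it realizes $K$ as a full subcomplex, takes the simplicial neighborhood $N$ given by the Simplicial Neighborhood Theorem, obtains from \cite[Lemma~3.4.1]{DavermanVenema} that $N$ is a mapping cylinder of a map $r:\partial N\longrightarrow K$ (which yields the spine property directly), and then pins down $\partial N$ purely homologically: the exact sequence of $(N,\partial N)$ together with $N\simeq S^1$ gives $H_3(N,\partial N;\mathbb{Z})\cong H_2(\partial N;\mathbb{Z})$, so $\partial N$ is orientable iff $N$ is; a $\mathbb{Z}_2$-coefficient sequence plus Lefschetz duality shows $\partial N$ is connected; and $\chi(\partial N)=2\chi(N)=0$, whence the surface classification gives torus or Klein bottle. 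Your route instead reads everything off the geometric fact that a regular neighborhood of a PL circle in a $3$-manifold is a $D^2$-bundle over $S^1$, hence a solid torus or solid Klein bottle; this is shorter and more concrete, but note that it is \emph{not} a formal consequence of general PL regular-neighborhood theory (codimension-two regular neighborhoods need not be bundles in general), so this low-dimensional fact needs its own classical justification, e.g.\ the mapping-torus description of a neighborhood of a curve; the paper's homological detour is precisely what lets it avoid relying on this. On the orientation dichotomy your argument is essentially the paper's (excision plus fundamental classes; the paper cites \cite[Lemma~3.27]{Hat} for the orientable direction and the nonvanishing of $H_3(N,\partial N;\mathbb{Z})$ for the converse), and your version is in fact slightly cleaner because the homeomorphism type of $N$ is already known; just be aware that the vanishing of $H_3(N,\partial N;\mathbb{Z})$ for non-orientable $N$ is not literally Lefschetz duality as usually stated (which presupposes orientability) but the standard top-dimensional homology vanishing for compact non-orientable manifolds, or duality with twisted coefficients. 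The reduction to the Main Theorem via Proposition~\ref{prop:con23} is identical in both proofs.
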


\begin{proof}
Since $K$ is tame we may assume without loss of generality that $M$ and $K$ are the geometric realizations of some simplicial complexes $\mathcal{S}$ and $\mathcal{T}$ with $\mathcal{T}$ a subcomplex of $\mathcal{S}$. Possibly after subdiving $\mathcal{S}$ we may assume that $\mathcal{T}$ is a full subcomplex of $\mathcal{S}$. That is, for any simplex $\sigma\in\mathcal{S}$ not contained in $K$ such that $\sigma\cap K\neq\emptyset$, the intersection $\sigma\cap K$ is either a vertex, an edge or a $2$-dimensional face of $\sigma$. Let $\mathcal{S}'$ be a derived subvidision of $\mathcal{S}$ near $\mathcal{K}$ (see \cite[pg. 32]{RouSan}). The simplicial neighborhood $N(\mathcal{T},\mathcal{S}')$ is defined as the union of the simplices of $\mathcal{S}'$ whose intersection with $K$ is non-empty. The Simplicial Neighborhood Theorem \cite[Theorem~3.11, pg. 34]{RouSan} ensures that the geometric realization $N=|N(\mathcal{T},\mathcal{S}')|$ is a compact $3$-manifold with boundary such that $K\subset{\rm int}(N)$. In addition, from \cite[Lemma~3.4.1, pg. 125 ]{DavermanVenema} $N$ is homeomorphic to the mapping cylinder with respect to a piecewise linear map $r:\partial N\longrightarrow K$. That is, $N$ is homeomorphic to the quotient space 
\[
\frac{(\partial N\times[0,1])\sqcup K}{(x,1)\sim r(x)}.
\]
This ensures that $K$ is a topological spine and a strong deformation retract of $N$. Consider the long exact sequence of singular homology of the pair $(N,\partial N)$.
\begin{equation}
\cdots\longrightarrow H_3(N;\mathbb{Z}) \longrightarrow H_3(N,\partial N;\mathbb{Z})\longrightarrow H_2(\partial N;\mathbb{Z})\longrightarrow H_2(N;\mathbb{Z})\longrightarrow\cdots
\label{eq:exact}
\end{equation}
Since $N$ and $K$ have the same homotopy type it follows that $H_3(N;\mathbb{Z})\cong H_2(N;\mathbb{Z})\cong\{0\}$. Taking this and \eqref{eq:exact} into account we get that $H_3(N,\partial N;\mathbb{Z})\cong H_2(\partial N;\mathbb{Z})$. As a consequence $\partial N$ is orientable if and only if $N$ is orientable. We see that $\partial N$ is connected. Consider the terminal part of the long exact sequence of reduced homology with $\mathbb{Z}_2$ coefficients of the pair $(N,\partial N)$. 
\begin{equation}
\cdots\longrightarrow H_1(N,\partial N;\mathbb{Z}_2) \longrightarrow \widetilde{H}_0(\partial N;\mathbb{Z}_2)\longrightarrow \widetilde{H}_0(N;\mathbb{Z}_2)\longrightarrow \widetilde{H}_0(N,\partial N;\mathbb{Z}_2)\longrightarrow 0
\label{eq:exact1}
\end{equation}
Since $N$ is connected $\widetilde{H}_0(N;\mathbb{Z}_2)\cong\{0\}$. On the other hand, Lefschetz duality Theorem ensures that $H_1(N,\partial N;\mathbb{Z}_2)\cong H^2(N;\mathbb{Z}_2)\cong\{0\}$. Taking this and \eqref{eq:exact1} into account it follows that $\widetilde{H}_0(\partial N;\mathbb{Z}_2)\cong\{0\}$ and, hence, $\partial N$ is connected.  A new application of Lefschetz duality gives that $\chi(N,\partial N)=-\chi(N)$ and, since $\chi(N,\partial N)=\chi(N)-\chi(\partial N)$, it follows that $\chi(\partial N)=2\chi(N)=0$. Therefore $\partial N$ is a connected closed $2$-manifold with zero Euler characteristic. Thus $\partial N$ is homeomorphic to the torus if $N$ is orientable and to the Klein bottle if $N$ is not. It remains to see that $N$ is orientable if and only if $K$ preserves orientation. Suppose that $N$ is orientable. Then the open $3$-manifold ${\rm int}(N)$ is orientable and \cite[Lemma~3.27, pg. 236]{Hat} ensures the existence of a homology class $\alpha_K\in H_3({\rm int}(N),{\rm int}(N)\setminus K;\mathbb{Z})$ such that $i_{x*}(\alpha_K)$ is a generator of $H_3({\rm int}(N),{\rm int}(N)\setminus \{x\};\mathbb{Z})$ for each $x\in K$, where $i_x:({\rm int}(N),{\rm int}(N)\setminus K)\hookrightarrow  ({\rm int}(N),{\rm int}(N)\setminus\{x\})$ denotes the inclusion map. Let $k:{\rm int}(N)\hookrightarrow M$ and $j_x: (M,M\setminus K)\hookrightarrow (M,M\setminus\{x\})$ be the inclusions. Then for each $x\in K$ we get the following commutative diagram 
\begin{equation}
\begin{CD}
H_3({\rm int}(N),{\rm int}(N)\setminus K;\mathbb{Z})@>k_*>> H_3(M,M\setminus K;\mathbb{Z})\\
@VVi_{x*}V     @VVj_{x*}V \\
H_3({\rm int}(N),{\rm int}(N)\setminus \{x\};\mathbb{Z}) @>k_*>> H_3(M,M\setminus \{x\};\mathbb{Z})
\end{CD}
\label{eq:inclusions}
\end{equation}
The excision property of homology ensures that the homomorphisms $k_*$ in \eqref{eq:inclusions} are isomorphisms. As a consequence, the homology class $\beta_K=k_*(\alpha_K)\in H_3(M,M\setminus K;\mathbb{Z})$ satisfies that $j_{x*}(\beta_K)$ is a generator of $H_3(M,M\setminus\{x\};\mathbb{Z})$ for each $x\in K$. Therefore $K$ preserves orientation.

Suppose that $K$ preserves orientation. Then $H_3(M,M\setminus K;\mathbb{Z})\neq\{0\}$ and excision ensures that $H_3({\rm int}(N),{\rm int}(N)\setminus K;\mathbb{Z})\neq\{0\}$. On the other hand, since $\partial N$ is collared in $N$ and $K$ is a spine of $N$ it easily follows that $H_3(N,\partial N;\mathbb{Z})\cong H_3({\rm int}(N),{\rm int}(N)\setminus K;\mathbb{Z})$ and, hence, nontrivial. Therefore $N$ is orientable.

The result follows from the Main Theorem. 
\end{proof}

  We recall that a connected closed $2$-manifold $S$ contained in a $3$-manifold $M$ is \emph{two sided} in $M$ if $S$ separates every sufficiently small connected neighborhood of itself in $M$. More precisely, $S$ is two sided in $M$ if there exists a neighborhood $V$ of $S$ in $M$ such that if $W$ is a connected neighborhood of $S$ in $M$ with $W\subset V$, then $W\setminus S$ is not connected. In general we say that a (possibly not connected) closed $2$-manifold $S$ contained in a $3$-manifold $M$  is two sided in $M$ if every component of $S$ is two sided in $M$.

\begin{theorem}\label{thm:closed}
Suppose that $M$ is a $3$-manifold and $K\subset M$ is a tame closed $2$-manifold that is two sided in $M$. Suppose that $K$ is an attractor for $f_0$  that  undergoes a Hopf bifurcation at $\lambda=0$. Then, there exists $\lambda_0>0$ such that for every $\lambda$ with $0<\lambda<\lambda_0$ there exists an attractor $K_\lambda$ that has the Borsuk's homotopy type of two disjoint copies of $K$ and converges to $K$ upper semicontinuosly as $\lambda$ tends to $0$. 
\end{theorem}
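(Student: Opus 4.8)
The plan is to reduce the statement to the Main Theorem exactly as in the proof of Theorem~\ref{thm:knot}: I would produce a compact $3$-manifold with boundary $N\subset M$ having $K$ as a topological spine, verify that $N$ satisfies the concentric rigidity property, and then read off the shape of the expelled attractor $K_\lambda$ from $\partial N$. The only genuinely new point compared with the knot case is the identification of $\partial N$ with two disjoint copies of $K$; the remaining ingredients are either routine or already available from earlier results.

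To build $N$ I would again exploit tameness. Since $K$ is tame, I may regard $M$ and $K$ as the geometric realizations of simplicial complexes $\mathcal{S}$ and a full subcomplex $\mathcal{T}$, and set $N=|N(\mathcal{T},\mathcal{S}')|$, the realization of a simplicial neighborhood taken after a suitable derived subdivision. By the Simplicial Neighborhood Theorem \cite[Theorem~3.11]{RouSan} this $N$ is a compact $3$-manifold with boundary with $K\subset{\rm int}(N)$, and by \cite[Lemma~3.4.1]{DavermanVenema} it is the mapping cylinder of a piecewise linear map $r:\partial N\to K$, so that $K$ is automatically a topological spine of $N$. The crucial extra observation is that, being the regular neighborhood of the closed codimension-one submanifold $K$, this $N$ is an $I$-bundle over $K$ with $K$ as zero section, and $\partial N$ is the associated $\partial I$-bundle; restricted to each component $K_i$ of $K$ the bundle is either the trivial bundle $K_i\times[-1,1]$ or the twisted $I$-bundle, and $K_i$ is two sided in $M$ precisely when this bundle is trivial. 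This is the main obstacle: translating the separation hypothesis in the definition of two sided into the triviality of the normal $I$-bundle over each component. Granting it, the two-sidedness of every component of $K$ forces the bundle to be the product $K\times[-1,1]$, so $N$ is a bicollar of $K$ and $\partial N\cong (K\times\{-1\})\sqcup(K\times\{1\})$ is the disjoint union of two copies of $K$.

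Once $N$ is at hand the remaining steps are immediate. Since $N$ is a compact $3$-manifold with boundary, Proposition~\ref{prop:con23} guarantees that it satisfies the concentric rigidity property, and $K$ is a spine of $N$ by construction, so hypotheses (1)--(3) of the Main Theorem hold for the given family $f_\lambda$. The Main Theorem then yields some $\lambda_0>0$ such that for $0<\lambda<\lambda_0$ there exists an attractor $K_\lambda$ with the shape of $\partial N$ that converges to $K$ upper semicontinuously as $\lambda\to 0$. Finally, $\partial N$ is a closed $2$-manifold and hence an ANR, so its shape coincides with its homotopy type; since $\partial N$ is the disjoint union of two copies of $K$, I conclude that $K_\lambda$ has the Borsuk's homotopy type of two disjoint copies of $K$, as claimed.
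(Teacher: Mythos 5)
Your proposal is correct in substance and reaches the right conclusion, but it takes a heavier route than the paper, and the one step you explicitly ``grant'' is precisely where the two routes differ. The paper's proof is two lines: by a theorem of Moise \cite[Theorem~3, pg.~192]{Moiseb}, a tame closed $2$-manifold that is two sided in a $3$-manifold is \emph{bicollared}; a bicollar $C\cong K\times[-1,1]$ is then itself the required compact $3$-manifold with boundary, $K$ is visibly a topological spine of $C$ (since $C\setminus K\cong \partial C\times[0,+\infty)$), $\partial C$ is two disjoint copies of $K$, and the Main Theorem (with Proposition~\ref{prop:con23} supplying concentric rigidity, exactly as you use it) finishes the argument. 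You instead import the simplicial-neighborhood machinery from the proof of Theorem~\ref{thm:knot} --- the Simplicial Neighborhood Theorem \cite{RouSan} and the mapping-cylinder structure \cite{DavermanVenema} --- and then must show that the regular neighborhood $N$ is a normal $I$-bundle over $K$ whose restriction to each component is trivial because that component is two sided. That claim is true and standard, but note two things: first, the paper's definition of two sided is a separation condition on small neighborhoods, so the equivalence with triviality of the $I$-bundle needs an argument (twisted $I$-bundle neighborhoods minus the zero section are connected, trivial ones are not); second, this equivalence, together with local flatness of PL surfaces in $3$-manifolds, is essentially the content of the very theorem of Moise the paper cites, so closing your gap by citation makes the regular-neighborhood construction redundant. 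What your route buys is uniformity with the knot case and a sharper picture in general: it identifies $\partial N$ as the $\partial I$-bundle over $K$ even without two-sidedness, showing that a one-sided component would contribute a connected double cover (e.g., a torus over a Klein bottle) rather than two copies --- which also makes clear that the two-sided hypothesis is exactly what the conclusion requires. What the paper's route buys is brevity: the bicollar is handed to you with the spine structure and the boundary identification built in.
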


\begin{proof}
Since $K\subset M$ is a tame closed $2$-manifold that is two sided in $M$  \cite[Theorem~3, pg. 192]{Moiseb} ensures that $K$ is bicollared. Let $C$ be a bicollar of $K$. Then there exists a homeomorphism $h:C\longrightarrow K\times[0,1]$ such that $h(K)=K\times\{0\}$. Hence $K$ is a topological spine of $C$ and the result follows by applying the Main Theorem.
\end{proof}

\begin{theorem}\label{thm:sub}
Suppose that $M$ is a $3$-manifold and $K\subset M$ is a tame compact $3$-manifold with boundary that is an  attractor for $f_0$. Suppose that $K$ undergoes a Hopf bifurcation at $\lambda=0$. Then, there exists $\lambda_0>0$ such that for every $\lambda$ with $0<\lambda<\lambda_0$ there exists an attractor $K_\lambda$ that has the Borsuk's homotopy type of $\partial K$  and converges to $K$ upper semicontinuosly as $\lambda$ tends to $0$. 
\end{theorem}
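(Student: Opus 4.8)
The plan is to deduce the statement from the Main Theorem by exhibiting $K$ as a topological spine of a slightly larger compact $3$-manifold with boundary $N\subset M$ whose boundary $\partial N$ is homeomorphic to $\partial K$. Since $K$ is already given to be an attractor of $f_0$ undergoing a Hopf bifurcation, hypotheses (1) and (3) of the Main Theorem hold automatically, so the whole task reduces to producing such an $N$ and checking that it has the concentric rigidity property.

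First I would observe that, because $K$ is tame, its boundary $\partial K$ is a tame closed $2$-manifold lying in ${\rm int}(M)$. Unlike in Theorem~\ref{thm:closed}, two-sidedness here is automatic: $\partial K$ is the frontier of the codimension-zero compact submanifold $K$, so each of its components separates a small neighborhood in $M$ into a part inside $K$ and a part outside, whence $\partial K$ is two sided in $M$. Then, exactly as in the proof of Theorem~\ref{thm:closed}, \cite[Theorem~3, pg. 192]{Moiseb} guarantees that $\partial K$ is bicollared.

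Next I would use this bicollar to attach an exterior collar to $K$. Choose a bicollar $h\colon \partial K\times[-1,1]\longrightarrow M$ with $h(\partial K\times\{0\})=\partial K$, oriented so that $h(\partial K\times(0,1])\subset M\setminus K$ (so that $h(\partial K\times[0,1])$ meets $K$ only along $\partial K$), and set $N=K\cup h(\partial K\times[0,1])$. Then $N$ is a compact $3$-manifold with boundary, $K\subset{\rm int}(N)$, and $\partial N=h(\partial K\times\{1\})$ is homeomorphic to $\partial K$. Moreover $N\setminus K=h(\partial K\times(0,1])$ is homeomorphic to $\partial K\times(0,1]$, and since $(0,1]$ is homeomorphic to $[0,+\infty)$ carrying $1$ to $0$, we get $N\setminus K\cong \partial N\times[0,+\infty)$. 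Hence $K$ is a topological spine of $N$.

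Finally, since $N$ is a compact $3$-manifold with boundary, Proposition~\ref{prop:con23} tells us that $N$ satisfies the concentric rigidity property, so all three hypotheses of the Main Theorem are met. The Main Theorem then yields, for every sufficiently small $\lambda>0$, an attractor $K_\lambda$ with the shape of $\partial N$ converging to $K$ upper semicontinuously as $\lambda\to 0$; as $\partial N$ is homeomorphic to $\partial K$ (hence of the same homotopy type and therefore the same shape), this is precisely the asserted conclusion. The main obstacle is the collar construction in the second and third steps, namely verifying that $\partial K$ is bicollared and that the exterior collar makes $K$ a spine of $N$; once $N$ is in hand the argument is a direct invocation of Proposition~\ref{prop:con23} and the Main Theorem, in complete parallel with Theorem~\ref{thm:closed}.
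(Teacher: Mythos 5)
Your proposal is correct and follows essentially the same route as the paper: both proofs bicollar $\partial K$ using Moise's tameness results, attach the exterior half of the bicollar to form $N=K\cup C$ so that $K$ is a topological spine of $N$ with $\partial N\cong\partial K$, and then conclude via Proposition~\ref{prop:con23} and the Main Theorem. The only cosmetic difference is that the paper cites \cite[Theorem~2, pg.~191]{Moiseb} directly for the bicollaredness of the boundary of a tame compact $3$-manifold with boundary, whereas you derive it by noting that $\partial K$ is tame and automatically two sided and then invoking \cite[Theorem~3, pg.~192]{Moiseb} as in Theorem~\ref{thm:closed}; both routes are valid.
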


\begin{proof}
Since $K\subset M$ is a tame compact $3$-manifold with boundary $\partial K$ is bicollared \cite[Theorem~2, pg. 191]{Moiseb}. Let $C$ be a bicollar of $\partial K$ and consider the compact $3$-manifold with boundary $N=K\cup C$. The construction ensures that $K\subset{\rm int}(N)$ and that $K$ and $N$ are concentric. Therefore $K$ is a topological spine of $N$ and the result follows by applying the Main Theorem.
\end{proof}

\section{A characterization of attractors}\label{sec:5}

In this section we present some results regarding attractors in locally compact metric spaces that will be useful in the sequel. These results are well known in the case of flows (see \cite[Lemma~3.1]{Sal} and \cite[Lemma~3.2]{Sal}). They are probably also well-known in general but since we did not find any reference about them in the discrete case we state and prove them here. 

Let $X$ be a locally compact metric space and $f:X\longrightarrow X$ a homeomorphism. An invariant compactum $K\subset X$ is said to be \emph{isolated} if  it is the maximal invariant set contained in a compact neighboord $N$ of itself. Such a neighborhood $N$ is said to be an \emph{isolated neighborhood}. In particular, attractors are isolated invariant. sets. Let $N$ be a compactum that is the closure of its interior. We define the \emph{exit set} of $N$ as
\[
N^-=\{x\in N\mid f(x)\notin \mathring{N}\}.
\]
Let $K$ be an isolated invariant set. A compact pair $(N,L)$ is said to be a \emph{filtration pair} if it satisfies
\begin{enumerate}
\item[(1)] $N$ and $L$ are the closure of their interiors.
\item[(2)] $\overline{N\setminus L}$ is an isolated neighborhood of $K$.
\item[(3)] $L$ is a neighborhood of $N^-$ in $N$.
\item[(4)] $f(L)\cap \overline{N\setminus L}=\emptyset$.
\end{enumerate}

Filtration pairs were introduced and studied by Franks and Richeson in \cite{FrRic}. A useful fact about filtration pairs is that every isolated invariant set has a basis of neighborhoods comprised of filtration pairs \cite[Proposition~3.5]{FrRic} and \cite[Theorem~3.6]{FrRic}.

The following result can be regarded as a discrete version of \cite[Lemma~3.1]{Sal}. The proof for the discrete case is an adaptation of the proof for flows using filtration pairs to overcome the lack of continuous trajectories.

\begin{proposition}\label{at1}
Let $X$ be a locally compact metric space and $f:X\longrightarrow X$ a homeomorphism. Then, an invariant compactum $K\subset X$ is an attractor if and only if there exists a neighborhood $U$ of $K$ such that for every $x\in U\setminus K$ there exists some $n_x>0$ such that $f^{-n_x}(x)\notin U$.
\end{proposition}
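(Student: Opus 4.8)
The plan is to prove both implications, using the trapping region description of attractors recalled in the preliminaries for the forward direction and a Lyapunov stability argument powered by local compactness for the converse.

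For the direct implication, suppose $K$ is an attractor. Then it possesses a trapping region $W$, so that $f(W)\subset\mathring{W}$, $K\subset\mathring{W}$ and $K=\bigcap_{n\geq 0}f^n(W)$. I would take $U=\mathring{W}$. Given $x\in U\setminus K$, the fact that $x\notin\bigcap_{n\geq 0}f^n(W)$ produces an integer $m\geq 0$ with $x\notin f^m(W)$; since $x\in\mathring{W}\subset f^0(W)$ this forces $m\geq 1$, and $x\notin f^m(W)$ is equivalent to $f^{-m}(x)\notin W$, hence $f^{-m}(x)\notin U$. Thus $n_x=m$ works and this direction is immediate.

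The converse is the substantial part. First I would record that $K$ is \emph{isolated invariant}: by hypothesis any invariant set contained in $U$ must lie in $K$, so if $N$ is any compact neighbourhood of $K$ with $N\subset U$ (which exists because $X$ is locally compact and $K$ is compact) then the maximal invariant set in $N$ is exactly $K$. The goal is then to show that $K$ is Lyapunov stable, since an isolated invariant set that is Lyapunov stable is automatically an attractor: a positively invariant compact neighbourhood $P\subset N$ satisfies $\omega(P)=\bigcap_{n\geq 0}f^n(P)$, and this intersection is an invariant subset of the isolating neighbourhood $N$, hence equals $K$.

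The heart of the argument, and the step I expect to be the main obstacle, is proving Lyapunov stability without continuous trajectories. I would argue by contradiction. Using continuity of $f$ and compactness of $K=f(K)$, fix an open neighbourhood $W_0$ of $K$ with $\overline{W_0}$ compact, $\overline{W_0}\subset U$ and $f(\overline{W_0})\subset U$. If $K$ were not stable there would be points $x_k\to K$ whose forward orbits leave $W_0$; let $m_k\geq 1$ be the first exit time and set $y_k=f^{m_k}(x_k)$. Local compactness lets me pass to a convergent subsequence $y_k\to y$, with $y\notin W_0$ (so $y\notin K$) and $y\in f(\overline{W_0})\subset U$. A short argument shows $m_k\to\infty$: otherwise a constant subsequence would give $y_k=f^m(x_k)\to f^m(x_\infty)\in K$ for some $x_\infty\in K$, contradicting $y\notin K$. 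Since every iterate $f^i(x_k)$ with $0\leq i\leq m_k-1$ stays in $W_0$, passing to the limit yields $f^{-j}(y)\in\overline{W_0}\subset U$ for every $j\geq 0$. This exhibits a point $y\in U\setminus K$ whose entire backward orbit remains in $U$, contradicting the hypothesis. Hence $K$ is stable.

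Finally, the construction of the positively invariant neighbourhood $P$ is routine: taking $P=\overline{\bigcup_{n\geq 0}f^n(V')}$ for a neighbourhood $V'$ furnished by stability and chosen small enough that $P\subset N$, one checks $f(P)\subset P$ directly, and the identity $\omega(P)=\bigcap_{n\geq 0}f^n(P)$ then gives $\omega(P)=K$, so $K$ is an attractor. Filtration pairs could be used to organize the isolating step, but the sequential compactness argument already supplies everything needed.
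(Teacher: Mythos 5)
Your proposal is correct, but for the substantial (sufficiency) direction it takes a genuinely different route from the paper. The paper adapts Salamon's continuous-time argument by means of filtration pairs in the sense of Franks and Richeson: it takes a filtration pair $(N,L)$ for $K$ inside $U$, uses compactness of $L$ to extract a uniform backward escape time $n_0$, and then shows by bookkeeping on first exit times that a small neighborhood $V$ of $K$ disjoint from $L$ satisfies $f^n(V)\subset N$ for every $n\geq 0$, whence $\omega(V)=K$ by isolation. You dispense with filtration pairs altogether: you obtain the positively invariant neighborhood by a first-exit-time compactness argument --- if orbits starting arbitrarily close to $K$ kept leaving $W_0$, the exit points $y_k=f^{m_k}(x_k)$ would accumulate (after the observation $m_k\to\infty$) at a point $y\in U\setminus K$ whose entire backward orbit lies in $\overline{W_0}\subset U$, contradicting the hypothesis directly --- and then conclude with the standard construction $P=\overline{\bigcup_{n\geq 0}f^n(V')}$ together with the identity $\omega(P)=\bigcap_{n\geq 0}f^n(P)$, valid for compact positively invariant $P$, and isolation. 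Your version is more elementary and self-contained (no appeal to the existence theory of filtration pairs) and applies the backward-orbit hypothesis exactly where the paper instead uses the structure of the exit set; the paper's version yields a uniform escape bound and keeps the proof visibly parallel to the flow case of Salamon that it cites. One wording point, not a gap: what you need and what your contradiction actually negates is not full Lyapunov stability but stability at the scale of $W_0$, i.e.\ the existence of $V'$ with $f^n(V')\subset W_0$ for all $n\geq 0$; your hypothesis ``points $x_k\to K$ whose forward orbits leave $W_0$'' is precisely the negation of that, so the argument is coherent, but ``if $K$ were not stable'' should be phrased accordingly (alternatively, run the same argument for every $W_0'\subset W_0$ to get genuine stability). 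The necessity direction is the same trivial computation in both proofs, yours phrased via a trapping region, the paper's via $K=\omega(U)$.
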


\begin{proof}
Suppose that $K$ is an attractor and let $U$ be a compact neighborhood such that $K=\omega(U)$. Assume that $x\in U$ such that $f^{-n}(x)\in U$ for every $n\geq 0$. Then, $x\in f^n(U)$ for every $n\geq 0$ and, as a consequence, $x\in \omega(U)=K$. This proves the necessity.

Conversely, let $U$ be a neighborhood of $K$ satisfying that for every $x\in U\setminus K$ there is some $n_x>0$ such that $f^{-n_x}(x)\notin U$. The existence of such neighborhood ensures that $K$ is an isolated invariant set. Let $(N,L)$ be a filtration pair for $K$ contained in $U$. Then, the condition on $U$ ensures that for each $x\in N\setminus K$ there exists some $m_x>0$ such that $f^{-m_x}(x)\notin N$. Since $L$ is compact there exists $n_0>0$ such that for every point $x\in L$, $\cup_{0\leq n\leq n_0} f^{-n}(x)\nsubseteq N$. Now choose a neighborhood $V\subset N$ of $K$ disjoint from $L$ and with the additional property that $f^n(V)\subset N$ for every $n$ with $0\leq n\leq n_0$. Then $f^n(V)\subset N$ for every $n\geq 0$. Indeed, suppose on the contrary that there exists a point $x\in V$ and $n'>0$ such that $f^{n'}(x)\notin N$. Taking into account that $L$ is a neighborhood of $N^-$ choose $n_L>0$ such that $f^n(x)\in N$ for every $n$ with $0\leq n\leq n_L$, $f^{n_L}(x)\in L$ and $f^{n_L+1}(x)\notin N$. Notice that, by assumption, $n_L\geq n_0$.  Since $f^{n_L}(x)\in L$ there exists $k$ with $0\leq k\leq n_0$ such that $f^{n_L-k}(x)\notin N$.  This is in contradiction with the choice of $n_L$. Hence, $\omega(V)$ is a compact invariant set contained in $N$ with $K\subset \omega(V)$ and, since $N$ is an isolating neigborhood of $K$ the equality follows.
\end{proof}

\begin{proposition}\label{at2}
Let $X$ be a locally compact metric space and let $f:X\rightarrow X$ be a
homeomorphism. Suppose that $K$ is an attractor of $f$ and that $
K_{0}\subset K$ is an attractor of $f|_K$. Then $K_{0}$ is an attractor of $f$.
\end{proposition}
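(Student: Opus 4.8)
The plan is to deduce everything from the characterization of attractors in Proposition~\ref{at1}, applied twice: once to the homeomorphism $f$ on the ambient space $X$ with the attractor $K$, and once to the restriction $f|_K$ on the compact (hence locally compact) metric space $K$ with the attractor $K_0$. First I would record that $K_0$ is a compact $f$-invariant set: it is compact because it is an attractor of $f|_K$, and it is $f$-invariant because it is $(f|_K)$-invariant and $K$ is $f$-invariant. This is what makes Proposition~\ref{at1} applicable to $K_0$ as a candidate attractor of $f$.

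Next I would produce the two neighborhoods supplied by Proposition~\ref{at1}. From $K$ being an attractor of $f$ I get a neighborhood $U_K$ of $K$ in $X$ such that every $x\in U_K$ whose full backward orbit $\{f^{-n}(x)\}_{n>0}$ stays in $U_K$ must lie in $K$ (this is just the contrapositive of the escape condition). From $K_0$ being an attractor of $f|_K$ I get a neighborhood $U_{00}$ of $K_0$ in $K$ with the analogous property \emph{inside} $K$; shrinking it I may assume it is relatively open, say $U_{00}=K\cap\tilde U$ with $\tilde U$ open in $X$. Here I would note the small but convenient fact that the escape property persists when the neighborhood is shrunk (a backward iterate that leaves the larger set a fortiori leaves the smaller one), which gives the freedom to replace $U_{00}$ by its relative interior.

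Then I would choose a neighborhood $U$ of $K_0$ in $X$ with $U\subset \mathring U_K\cap\tilde U$; this is possible because $\mathring U_K$ and $\tilde U$ are open sets containing $K_0$, and by local compactness $U$ can be taken compact. The verification is the heart of the argument. Given $x\in U$ whose entire backward orbit remains in $U$, the inclusion $U\subset U_K$ forces $x\in K$ by the first property; then, since $K$ is invariant, the backward orbit lies in $U\cap K\subset\tilde U\cap K=U_{00}$, and the second property (applied to $f|_K$) forces $x\in K_0$. Contrapositively, every $x\in U\setminus K_0$ has some backward iterate leaving $U$, so Proposition~\ref{at1} yields that $K_0$ is an attractor of $f$.

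The main obstacle is not any single hard estimate but the bookkeeping that makes one neighborhood $U$ serve both roles at once: it must be small enough to sit inside $U_K$ and to meet $K$ inside $U_{00}$, while still being a genuine neighborhood of $K_0$. The conceptual point to get right is the two-stage funnelling---a point whose backward orbit is bounded is first pinned to $K$ by the ambient attractor and only then pinned to $K_0$ by the internal attractor---together with the invariance of $K$, which is exactly what lets the second, intrinsic application of Proposition~\ref{at1} go through.
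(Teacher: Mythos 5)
Your proposal is correct and is essentially the paper's own argument: both apply Proposition~\ref{at1} twice (once to $K$ as an attractor of $f$, once to $K_0$ as an attractor of $f|_K$, using the invariance of $K$), and both verify the escape condition on a single neighborhood of $K_0$ obtained by intersecting the two given ones -- the paper simply takes $U=V\cap W$ and splits into the cases $x\in U\setminus K$ and $x\in (U\cap K)\setminus K_0$, which is the direct-form version of your contrapositive ``two-stage funnelling.'' Your extra bookkeeping (relative interiors, compactness of $U$) is harmless but not needed, since Proposition~\ref{at1} only requires a neighborhood.
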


\begin{proof}
Since $K$ is an attractor of $f$ and $K_0$ is an attractor of $f|_K$, Proposition~\ref{at1} ensures the existence of neighborhoods $V$ and $W$ of $K$ and $K_0$ respectively such that for every $x\in V\setminus K$ there exists $n_x>0$ such that $f^{-n_x}(x)\notin V$ and for every $x\in (W\cap K)\setminus K_0$ there exists $m_x>0$ such that $f^{-m_x}(x)\notin W$. Then,  $U=V\cap W$ is a neighborhood of $K_0$ and, if $x\in U\setminus K_0$ we have two possibilites, either $x\in U\setminus K$ and $f^{-n_x}(x)\notin U$ or $x\in U\cap K\setminus K_0\subset (W\cap K)\setminus K_0$ and $f^{-m_x}(x)\notin U$. Therefore, Proposition~\ref{at1} ensures that $K_0$ is an attractor of $f$. 
\end{proof}

\section{Proof of the Main Theorem}\label{sec:6}

Let $W$ be a trapping region of $K$ for $f_0$. By assumption $K$ is a topological spine of some compact $n$-manifold  with boundary $N$ contained in $M$. By Proposition~\ref{prop:con} we may assume that $N\subset\mathring{W}$ and that $\partial N$ is bicollared. We recall that the Theorem of invariance of domain guarantees that ${\rm int}(N)$ coincides with the topological interior of $N$. Since $W$ is a trapping region of $K$ there exists $k>0$ such that $f_0^k(W)\subset{\rm int}(N)$. Then, the continuous dependence on the parameter ensures the existence of $\lambda_0>0$ such that $W$ is a trapping region for $f_\lambda$  for $0<\lambda<\lambda_0$ satisfying that 
\begin{equation}
f^k_\lambda(W)\subset{\rm int}(N).
\label{eq:1}
\end{equation}
 Let $S_\lambda$ be the attractor of $f_\lambda$ determined by $W$. Since $K$ is  an invariant set for $f_\lambda$ contained in $W$ it follows that $K\subset S_\lambda$. Moreover,  the attracting character of $S_\lambda$ ensures that the basin of repulsion $\mathcal{R}_\lambda$ of $K$ is also contained in $S_\lambda$. Then
 \begin{equation}
 K_\lambda=S_\lambda\setminus \mathcal{R}_\lambda 
 \label{eq:2}
 \end{equation}
is an attractor for $f_\lambda|_{S_\lambda}$ and, since $S_\lambda$ is an attractor, Proposition~\ref{at2} ensures that $K_\lambda$ is an attractor for $f_\lambda$. 

 We prove that $K_\lambda$ has the Borsuk's homotopy type of $\partial N$. Since $\mathcal{R}_\lambda$ is an open set such that $K\subset\mathcal{R}_\lambda\subset N$ by Proposition~\ref{prop:con} there exists a compact $n$-manifold with boundary $N_1\subset\mathcal{R}_\lambda$ that is concentric with $N$ and such that $K\subset{\rm int}(N_1)$. Let $A_0=N\setminus{\rm int}(N_1)$ be the compact $n$-manifold with boundary bounded by $\partial N$ and $\partial N_1$. The manifold $A_0$ is a compact neighborhood of $K_\lambda$. Moreover, since the basin of attraction of $K_\lambda$ is $\mathcal{A}_\lambda\setminus K$ where $\mathcal{A}_\lambda$ is the basin of attraction of $S_\lambda$, it follows that $A_0$ is contained in the basin of attraction of $K_\lambda$. As a consequence there exists $j>0$ such that $f_\lambda^j(A_0)\subset{\rm int}(A_0)$. If for every $m>0$ we choose $A_m=f_\lambda^{mj}(A_0)$  we get a sequence of compact $n$-manifolds with boundary such that $A_{m+1}\subset {\rm int}(A_m)$ and
 \begin{equation}
 K_\lambda=\bigcap_{m\geq 0} A_m.
 \label{eq:3}
 \end{equation}

We see that $A_0$ and $A_1$ are concentric. Since $f^j_\lambda$ is a homeomorphism and $A_1=f^j_\lambda(A_0)$ it follows that 
\begin{equation}
{\rm int}(A_1)=f^j_\lambda({\rm int}(A_0))=f^j_\lambda({\rm int}(N)\setminus N_1)=f^j_\lambda({\rm int}(N))\setminus f^j_\lambda(N_1).
\label{eq:4}
\end{equation}
Then
\begin{equation}
A_0\setminus{\rm int}(A_1)=((N\setminus {\rm int}(N_1)\setminus f^j_\lambda({\rm int}(N)))\cup ((N\setminus {\rm int}(N_1))\cap f^j_\lambda(N_1)).
\label{eq:5}
\end{equation}
Since $N_1\subset f^j_\lambda(N)\subset N$ and $A_1\subset {\rm int}(A_0)$ it easily follows that $N_1\subset {\rm int}(f^j_\lambda(N_1))\subset f^j_\lambda(N_1)\subset N$. Taking this into account together with the fact that $N_1\subset f^j_\lambda(N)$ in \eqref{eq:5} yields
\begin{equation}
A_0\setminus{\rm int}(A_1)=(N\setminus f^j_\lambda({\rm int}(N)))\cup (f^j_\lambda(N_1)\setminus{\rm int}(N_1)).
\label{eq:6}
\end{equation}
 
The compact $n$-manifold with boundary $f_\lambda^j(N)\subset {\rm int}(N)$ has bicollared boundary homeomorphic to $\partial N$ and satisfies that $N_1\subset{\rm int}(f_\lambda^j(N))$. Hence, since $N_1$ is concentric with $N$ and $N$ has the concentric rigidity property it follows that $f_\lambda^j(N)$ is concentric with $N$. Reasoning in the same fashion with $f_\lambda^j(N_1)$  we get that $f_\lambda^j(N_1)$ and $N_1$ are concentric. The concentricity of $A_0$ and $A_1$ follows from this discussion after noticing that the fact that $N_1\subset{\rm int}(N)$ ensures that the union in \eqref{eq:6} is disjoint.

Since $A_0$ and $A_1$ are concentric and for $m>0$ the map
\[
f_\lambda^{mj}|_{A_0}:(A_0,A_1)\longrightarrow (A_m,A_{m+1})
\]
is a homeomorphism we obtain that $A_m$ is concentric with $A_{m+1}$ for every $m\geq 0$. As a consequence $K_\lambda$ is a topological spine of $A_0$ and, hence, Corollary~\ref{cor:shape} guarantees that the inclusion $i:K_\lambda\hookrightarrow A_0$ is a shape equivalence. Hence, $K_\lambda$ has the Borsuk's homotopy type of $A_0$. However, the concentricity of $N$ and $N_1$ ensures that $A_0$ is homeomorphic to $\partial N\times[0,1]$ and, thus, has the homotopy type of $\partial N$. Therefore $K_\lambda$ has the shape of $\partial N$.

To finish the proof we observe  that \eqref{eq:1} ensures that $K_\lambda\subset{\rm int}(N)$. Since $N$ can be chosen arbitrarily close to $K$, it follows that $K_\lambda$ converges to $K$ upper-semicontinuously as $\lambda\to 0$. \qed

\section{A generalization:\ attracting spheres of lower dimensions.}\label{sec:7}

We have seen in Theorem 11 that when an asymptotically stable point $p$
undergoes a Hopf bifurcation in an $n$-dimensional manifold then a family of
attractors with the Borsuk homotopy type of $S^{n-1}$evolves from $p$ .
There are, however, other bifurcations in which, under partially weaker
conditions, attractors are still produced with the Borsuk homotopy type of $%
S^{k}$ for some $k<n-1$. Consider for instance the discrete system in $%
\mathbb{R}^{3}$ corresponding to the time $1$ function of the Lorenz flow 
depending on the classical parameter $r$ (the Rayleigh number) \cite{Lor}. It is known
that for $r=1$ a pitchfork bifurcation takes place at the origin which
creates attractors which are $0$-dimensional spheres. This is only a
particular case of more general pitchfork bifurcations in $\mathbb{R}^{n}$
which produce attracting spheres $S^{k}$ for $k<n-1$ evolving from equilibria. In the following proposition we present a result which encompasses many
of these situations.

\begin{theorem}
Let $M$ be a $d$-dimensional manifold and $f_{\lambda }:M\rightarrow M$ be
a family of homeomorphisms continuously depending on $\lambda \in I$. Let $N$
be an $n$-dimensional submanifold which is invariant for all $\lambda$.
Suppose that the point $p\in N$ is an attractor for $f_{0}$ and a repeller
for the restriction $f_{\lambda }|_N$ for $\lambda >0$. Suppose, additionaly,
that there exists a neighborhood $V$ of $p$ in $M$ (the same for all $
\lambda $) such that the maximal invariant set of $f_{\lambda }$ inside $V$
is contained in $N$. Then there exists $\lambda _{0}>0$ such that for every $%
\lambda $ with $0<\lambda <\lambda _{0}$ there exists an attractor $%
K_{\lambda }$ of $f_{\lambda }$ which has the Borsuk homotopy type of $%
S^{n-1}$. Moreover the family $\{K_{\lambda }\}$ converges to $\{p\}$ upper
semicontinuously as $\lambda $ tends to $0$.
\end{theorem}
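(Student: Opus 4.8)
The plan is to reduce everything to a fixed-point Hopf bifurcation taking place \emph{inside} the invariant $n$-manifold $N$, apply Theorem~\ref{thm:fixed} there, and then use the hypothesis on $V$ together with Propositions~\ref{at1} and~\ref{at2} to promote the resulting attractor of the restriction to a genuine attractor of $f_\lambda$ on $M$.

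First I would look at the restricted family $f_\lambda|_N$ on the $n$-manifold $N$. Since $N$ is invariant and $p\in N$, the fact that $\{p\}$ is an attractor of $f_0$ immediately gives that $\{p\}$ is an (asymptotically stable) attractor of $f_0|_N$, while by hypothesis $\{p\}$ is a repeller of $f_\lambda|_N$ for $\lambda>0$. Thus $p$ undergoes a fixed-point Hopf bifurcation inside $N$, and Theorem~\ref{thm:fixed} applied to $f_\lambda|_N$ produces $\lambda_1>0$ and, for $0<\lambda<\lambda_1$, an attractor $K_\lambda\subset N$ of $f_\lambda|_N$ having the Borsuk homotopy type of $S^{n-1}$ and converging to $\{p\}$ upper semicontinuously as $\lambda\to 0$. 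This already delivers the shape statement and the convergence; what remains is to check that $K_\lambda$ is an attractor of the \emph{full} system $f_\lambda$ on $M$, not merely of the restriction.

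The key is to realise the ambient attractor and show it sits inside $N$. As $\{p\}$ is an attractor of $f_0$, it admits arbitrarily small trapping regions, so (replacing a trapping region by a high iterate of itself, which is again a trapping region shrinking to $\{p\}$) we may fix a trapping region $W\subset V$ of $p$ for $f_0$. By robustness of trapping regions there is $\lambda_2>0$ such that $W$ is a trapping region for $f_\lambda$ whenever $0<\lambda<\lambda_2$; let $S_\lambda=\bigcap_{m\ge 0}f_\lambda^m(W)$ be the attractor of $f_\lambda$ it determines in $M$. Since $S_\lambda$ is invariant and $S_\lambda\subset W\subset V$, it is contained in the maximal invariant set of $f_\lambda$ in $V$, which by the standing hypothesis lies in $N$; hence $S_\lambda\subset N$. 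For $\lambda$ small enough that $K_\lambda\subset\mathring{W}$, invariance of $K_\lambda$ forces $K_\lambda=f_\lambda^m(K_\lambda)\subset f_\lambda^m(W)$ for all $m\ge 0$, so $K_\lambda\subset S_\lambda$.

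Finally I would bridge the two systems. By Proposition~\ref{at1} applied in $N$, $K_\lambda$ being an attractor of $f_\lambda|_N$ yields a neighborhood $U$ of $K_\lambda$ in $N$ such that every $x\in U\setminus K_\lambda$ escapes $U$ in backward time. Because $S_\lambda\subset N$, the set $U\cap S_\lambda$ is a neighborhood of $K_\lambda$ in $S_\lambda$ inheriting the same escape property, so Proposition~\ref{at1} (now read inside $S_\lambda$) shows that $K_\lambda$ is an attractor of $f_\lambda|_{S_\lambda}$. Since $S_\lambda$ is an attractor of $f_\lambda$ in $M$ and $K_\lambda\subset S_\lambda$, Proposition~\ref{at2} concludes that $K_\lambda$ is an attractor of $f_\lambda$ on $M$. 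Taking $\lambda_0=\min\{\lambda_1,\lambda_2\}$, further shrunk so that $K_\lambda\subset\mathring{W}$, completes the argument. The one genuinely load-bearing step is the inclusion $S_\lambda\subset N$: it is precisely the hypothesis on the maximal invariant set in $V$ that collapses the ambient attractor into the submanifold, and without it there would be no reason for the restricted attractor $K_\lambda$ to attract the directions transverse to $N$. I expect this to be the main obstacle; the rest is bookkeeping with the two previously established propositions and Theorem~\ref{thm:fixed}.
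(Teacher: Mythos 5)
Your proposal is correct, but it takes a genuinely different route from the paper's. The paper never invokes Theorem~\ref{thm:fixed}; instead it re-runs the proof of the Main Theorem inside $N$. Concretely, it fixes a trapping region $W\subset V$, a topological $n$-cell $C\subset N\cap W$ with $\partial C$ bicollared in $N$, and an ambient neighborhood $\hat{C}$ of $p$ with $\hat{C}\cap N\subset{\rm int}(C)$; iterating $W$ into $\hat{C}$ and using the hypothesis on the maximal invariant set in $V$ it gets $p\in S_\lambda\subset{\rm int}(C)$ for the ambient attractor $S_\lambda$ determined by $W$, and then \emph{defines} $K_\lambda=S_\lambda\setminus\mathcal{R}_\lambda$, where $\mathcal{R}_\lambda$ is the basin of repulsion of $p$ for $f_\lambda|_N$, so that $K_\lambda$ is an attractor of $f_\lambda|_{S_\lambda}$ by construction and an attractor of $f_\lambda$ by Proposition~\ref{at2}; finally it recomputes the shape by hand, via a cell $C_1\subset\mathcal{R}_\lambda$ concentric with $C$, the annulus $A_0=C\setminus{\rm int}(C_1)$, the iterates $A_m=f_\lambda^{mj}(A_0)$, and the concentric-rigidity spine argument showing $K_\lambda$ is a topological spine of $A_0\approx S^{n-1}\times[0,1]$. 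You instead obtain $K_\lambda$, its shape and its upper semicontinuous convergence in one stroke by applying Theorem~\ref{thm:fixed} to the restricted family $f_\lambda|_N$, and then promote it to the ambient system: the hypothesis on $V$ forces $S_\lambda\subset N$ (the same pivot the paper uses, and which you rightly single out as load-bearing), upper semicontinuity puts $K_\lambda$ inside $\mathring{W}$ so that invariance gives $K_\lambda\subset S_\lambda$, and the two-way use of Proposition~\ref{at1} transfers the attractor property from $f_\lambda|_N$ to $f_\lambda|_{S_\lambda}$, after which Proposition~\ref{at2} finishes exactly as in the paper. Your version is more modular and shorter, since it does not repeat the concentricity computation; the price is the extra bridging step via Proposition~\ref{at1}, needed precisely because your $K_\lambda$ is a priori only an attractor of the restriction to $N$, whereas the paper's $K_\lambda=S_\lambda\setminus\mathcal{R}_\lambda$ is an attractor of $f_\lambda|_{S_\lambda}$ by its very definition. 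Your bridging details check out: invariance of $S_\lambda$ guarantees that the backward escape from $U$ happens within $S_\lambda$, so $U\cap S_\lambda$ inherits the escape property, and $S_\lambda\subset N$ is what makes $U\cap S_\lambda$ a genuine neighborhood of $K_\lambda$ in $S_\lambda$. What the paper's self-contained construction buys in exchange is an explicit description of the bifurcating attractor as the complement of the basin of repulsion inside $S_\lambda$, making the attractor--repeller structure visible rather than quoting Theorem~\ref{thm:fixed} as a black box.
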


\begin{remark}
Notice that the condition in the statement concerns only the restriction $%
f_{\lambda }|_N$ but the conclusion is that $K_{\lambda }$ is an attractor of 
$f_{\lambda }$ (not only of $f_{\lambda }|_N$).
\end{remark}

\begin{proof}
The proof consists of an adaptation of the proofs of previous results to the
conditions of the present theorem.

Let $W\subset V$ be a trapping region of $p$ for $f_{0}$. Consider a
topological $n$-cell $C\subset N\cap W$ such that $p\in{\rm int}(C)$ and $%
\partial C$ is bicollared in $N$. Let $\hat{C}$ be a neighborhood of $p$ in $%
M $ such that $\hat{C}\cap N\subset {\rm int}(C)$. Since $W$ is a trapping
region of $p$ there exists $k_0\geq 0$ such that $f_{0}^{k}(W)\subset 
\hat{C}$ for $k\geq k_0$. Moreover, the continuous dependence on the parameter ensures the
existence of $\lambda _{0}>0$ such that $W$ is a trapping region of $p$ and $%
f_{\lambda }^{k}(W)\subset \hat{C}$ for $\lambda \leq \lambda _{0}$ and $k\geq k_0$.
Consider the attractor $S_{\lambda }$ of $f_\lambda$ determined by $W$. Since for every $\lambda\in I$ the maximal invariant set inside $V$ is contained in $N$, the previous discussion ensures that $p\in S_{\lambda
}\subset {\rm int}(C)$. Moreover, since $S_{\lambda }$ is an attractor, the basin of
repulsion $\mathcal{R}_{\lambda }$ of $p$ for $f_{\lambda }|_N$ is contained
in $S_{\lambda }$. Consider $K_{\lambda }=S_{\lambda }\setminus\mathcal{R}_{\lambda
} $. Then $K_{\lambda }$ is an attractor of $f_{\lambda }|_{S_{\lambda}}$ and,
since $S_{\lambda}$ is an attractor of $f_{\lambda }$, then Proposition~\ref{at2} ensures that $K_{\lambda }$
is also an attractor of $f_{\lambda }$ (not just of $f_{\lambda }|_N$).

We must now prove that $S_{\lambda}$ has the Borsuk homotopy type of $%
\partial C\approx S^{n-1}$. In the sequel we consider the restriction $%
f_{\lambda }|_N$. Take an $n$-cell $C_{1}\subset \mathcal{R}_{\lambda }$
concentric with $C$ and such that $p\in {\rm int}(C_{1})$. Let $A_{0}=C\setminus{\rm int}(
C_{1})$. Obviously, $A_{0}$ is a compact neighborhood of $K_{\lambda }$ in $%
N $ contained in the basin of attraction of $K_{\lambda }$. As a
consequence, there exists $j>0$ such that $f_{\lambda }^{j}(A_{0})\subset 
{\rm int}(A_{0})$. Now consider $A_{m}=f_{\lambda }^{mj}(A_{0})$ and in this way
we obtain a sequence of compact $n$-manifolds with boundary such that $%
A_{m+1}\subset {\rm int}(A_{m})$ and $K_{\lambda}=\cap_{m\geq 0} A_{m}$. The same argument as in
the proof of the Main Theorem proves that $K_{\lambda }$ is a topological spine of $A_{0}$
and, therefore, the inclusion $i:K_{\lambda }\hookrightarrow A_{0}$ is a shape
equivalence. Moreover, since $C_1$ has been chosen to be concentric with $C$, $A_{0}$ is
homeomorphic to $S^{n-1}\times [0.1]$, and, thus, $A_{0}$ is homotopy
equivalent to $S^{n-1}$. We conclude that $K_{\lambda }$ is shape equivalent
to $S^{n-1}$. The rest of the proof is straightforward.
\end{proof}

\section*{Acknowledgment}

The authors thank Jos\'e M. Montesinos-Amilibia and Jaime J. S\'anchez-Gabites for inspiring conversations.

\bibliography{Biblio1}        

\begin{thebibliography}{10}

\bibitem{AKHuKe}
E.~Akin, M.~Hurley, and J.A. Kennedy.
\newblock Dynamics of topologically generic homeomorphisms.
\newblock {\em Mem. Amer. Math. Soc.}, 164(783):viii+130 pp., 2003.

\bibitem{BGS}
H.~Barge, A.~Giraldo, and J.M.R. Sanjurjo.
\newblock Bifurcations, robustness and shape of attractors of discrete
  dynamical systems.
\newblock {\em J. Fixed Point Theory Appl.}, 22(2):1--13, 2020.

\bibitem{BaSanRAC}
H.~Barge and J.M.R. Sanjurjo.
\newblock Flows in $\mathbb{R}^2_+$ without interior fixed points, global
  attractors and bifurcations.
\newblock {\em Rev. R. Acad. Cien. Serie A. Mat.}, 112(3):671--683, 2018.

\bibitem{BaSanTappl}
H.~Barge and J.M.R. Sanjurjo.
\newblock Dissipative flows, global attractors and shape theory.
\newblock {\em Topology Appl.}, 258:392--401, 2019.

\bibitem{Barge2020}
H.~Barge and J.M.R. Sanjurjo.
\newblock A {C}onley index study of the evolution of the {L}orenz strange set.
\newblock {\em Phys. D}, 401:132162, 2020.

\bibitem{BorCon}
K.~Borsuk.
\newblock Concerning homotopy properties of compacta.
\newblock {\em Fund. Math.}, 62:223--254, 1968.

\bibitem{Bormono}
K.~Borsuk.
\newblock {\em {T}heory of {S}hape}.
\newblock Monografie Matematyczne 59. Polish Scientific Publishers, Warsaw,
  1975.

\bibitem{DavermanVenema}
R.J. Daverman and G.A. Venema.
\newblock {\em Embeddings in manifolds}, volume 106 of {\em Graduate Studies in
  Mathematics}.
\newblock American Mathematical Society, Providence, RI, 2009.

\bibitem{DySe}
J.~Dydak and J.~Segal.
\newblock {\em Shape {T}heory. {A}n {I}ntroduction}.
\newblock Lecture Notes in Mathematics, 688. Springer, 1978.

\bibitem{Ed1}
C.H. Edwards.
\newblock Concentric solid tori in the $3$-sphere.
\newblock {\em Trans. Amer. Math. Soc.}, 102:1--17, 1962.

\bibitem{Ed2}
C.H. Edwards.
\newblock Concentricity in $3$-manifolds.
\newblock {\em Trans. Amer. Math. Soc.}, 113:406--423, 1964.

\bibitem{FrRic}
J.~Franks and F.~Richeson.
\newblock Shift equivalence and the {C}onley index.
\newblock {\em Trans. Amer. Math. Soc.}, 352(7):3305--3322, 2000.

\bibitem{GLS}
A.~Giraldo, V.~Laguna, and J.M.R. Sanjurjo.
\newblock Uniform persistence and {H}opf bifurcations in $\mathbb{R}^n_+$.
\newblock {\em J. Differential Equations}, 256(8):2949--2964, 2014.

\bibitem{GuSe}
B.~G\"unther and J.~Segal.
\newblock Every attractor of a flow on a manifold has the shape of a finite
  polyhedron.
\newblock {\em Proc. Amer. Math. Soc.}, 119:321--329, 1993.

\bibitem{Hast}
H.M. Hastings.
\newblock A higher-dimensional {P}oincar\'e-{B}endixson theorem.
\newblock {\em Glas. Mat. Ser. III}, 14 (34)(2):263--268, 1979.

\bibitem{Hat}
A.~Hatcher.
\newblock {\em Algebraic {T}opology}.
\newblock Cambridge University Press, Cambridge, 2002.

\bibitem{HudZee}
J.F.P. Hudson and Zeeman E.C.
\newblock On regular neighbourhoods.
\newblock {\em Proc. London Math. Soc.}, s3-14(4):719--745, 1964.

\bibitem{KapRod}
L.~Kapitanski and I.~Rodnianski.
\newblock Shape and {M}orse theory of attractors.
\newblock {\em Comm. Pure Appl. Math.}, 53:218--242, 2000.

\bibitem{Kirby}
R.C. Kirby.
\newblock Stable homeomorphisms and the annulus conjecture.
\newblock {\em Ann. of Math.}, 89:575--582, 1969.

\bibitem{Lor}
E.N. Lorenz.
\newblock Deterministic nonperiodic flow.
\newblock {\em J. Atmospheric Sci.}, 20:130--141, 1963.

\bibitem{MarSe}
S.~Marde\v{s}i\'c and J.~Segal.
\newblock {\em Shape {T}heory. {T}he {I}nverse {S}ystem {A}pproach}.
\newblock North-Holland Mathematical Library, 26. North-Holland Publishing Co.,
  1982.

\bibitem{MarMc}
J.E. Mardsen and M.~McCracken.
\newblock {\em The {H}opf bifurcation and its applications}.
\newblock Springer-Verlag, New York, 1976.

\bibitem{Moise}
E.E. Moise.
\newblock Affine structures in 3-manifolds. {V}. the triangulation theorem and
  hauptvermutung.
\newblock {\em Ann. of Math.}, 56:96--114, 1952.

\bibitem{Moiseb}
E.E. Moise.
\newblock {\em Geometric topology in dimensions {$2$} and {$3$}}, volume~47 of
  {\em Graduate Studies in Mathematics}.
\newblock Springer-Verlag, New York-Heidelberg, 1977.

\bibitem{Munk}
J.~Munkres.
\newblock {\em Elements of {A}lgebraic {T}opology}.
\newblock Addison-Wesley Publishing Company, Inc., 1984.

\bibitem{Naim}
J.~Naimark.
\newblock Motions close to doubly asymptotic motions.
\newblock {\em Soviet Math. Dokl.}, 8:228--231, 1967.

\bibitem{Quinn}
F.~Quinn.
\newblock Ends of maps. {III}. dimensions 4 and 5.
\newblock {\em J. Differential Geom.}, 17(3):503--521, 1982.

\bibitem{Rado}
T.~Rad\'{o}.
\newblock \"{U}ber den begriff der riemannschen fl\"{a}che.
\newblock {\em Acta Univ. Szeged}, 2:101--121, 1924.

\bibitem{Rob2}
J.C. Robinson.
\newblock Global attractors: topology and finite-dimensional dynamics.
\newblock {\em J. Dynam. Differential Equations}, 11(3):557--581, 1999.

\bibitem{RouSan}
C.P. Rourke and B.J. Sanderson.
\newblock {\em Introduction to {P}iecewise-{L}inear {T}opology}.
\newblock Springer-Verlag, New York, 1972.

\bibitem{RuTa}
D.~Ruelle and F.~Takens.
\newblock On the nature of turbulence.
\newblock {\em Comm. Math. Phys.}, 20:167--192, 1971.

\bibitem{RSG}
F.R. Ruiz~del Portal and J.J. S\'anchez-Gabites.
\newblock \v{C}ech cohomology of attractors of discrete dynamical systems.
\newblock {\em J. Differential Equations}, 257(8):2826--2845, 2014.

\bibitem{Sack}
R.~Sacker.
\newblock On invariant surfaces and bifurcation of periodic solutions of
  ordinary differential equations.{C}hapter {II}. {B}ifurcation-mapping method.
\newblock {\em J. Difference Equ. Appl.}, 15(8-9):759--774, 2009.

\bibitem{Sal}
D.~Salamon.
\newblock Connected simple systems and the {C}onley index of isolated invariant
  sets.
\newblock {\em Trans. Amer. Math. Soc.}, 291:1--41, 1985.

\bibitem{GabSd}
J.J. S\'anchez-Gabites.
\newblock On the shape of attractors of discrete dynamical systems.
\newblock arXiv:1511.06549.

\bibitem{SGRAC}
J.J. S{\'a}nchez-Gabites.
\newblock Dynamical systems and shapes.
\newblock {\em Rev. R. Acad. Cien. Serie A. Mat.}, 102(1):127--159, 2008.

\bibitem{SGth}
J.J. S{\'a}nchez-Gabites.
\newblock {\em Aplicaciones de topolog{\'\i}a geom{\'e}trica y algebraica al
  estudio de flujos continuos en variedades}.
\newblock PhD thesis, Universidad Complutense de Madrid, 2009.

\bibitem{SGNLA}
J.J. S{\'a}nchez-Gabites.
\newblock How strange can an attractor for a dynamical system in a 3-manifold
  look?
\newblock {\em Nonlinear Anal.}, 74(17):6162--6185, 2011.

\bibitem{SGball}
J.J. S\'anchez-Gabites.
\newblock Arcs, balls and spheres that cannot be attractors in $\mathbb{R}^3$.
\newblock {\em Trans. Amer. Math. Soc.}, 368(5):3591--3627, 2016.

\bibitem{GabSanjGl}
J.J. S\'anchez-Gabites and J.M.R. Sanjurjo.
\newblock Shape properties of the boundary of attractors.
\newblock {\em Glas. Mat. Ser. III}, 42(62)(1):117--130, 2007.

\bibitem{Sanin}
J.M.R. Sanjurjo.
\newblock An intrinsic description of shape.
\newblock {\em Trans. Amer. Math. Soc.}, 329:625--636, 1992.

\bibitem{SanMul}
J.M.R. Sanjurjo.
\newblock Multihomotopy, \v{C}ech spaces of loops and shape groups.
\newblock {\em Proc. London Math. Soc.}, 69:330--344, 1994.

\bibitem{Sanjhopf}
J.M.R. Sanjurjo.
\newblock Global topological properties of the {H}opf bifurcation.
\newblock {\em J. Differential Equations}, 243:238--255, 2007.

\bibitem{SeFlo}
P.~Seibert and J.S. Florio.
\newblock On the foundations of bifurcation theory.
\newblock {\em Nonlinear Anal.}, 22(8):927--944, 1994.

\end{thebibliography}
\bibliographystyle{plain}  

\end{document}